\newcommand{\mysection}[1]{\section{#1}
      \setcounter{equation}{0}}
\newcommand\cbrk{\text{$]$\kern-.15em$]$}} 
\newcommand\opar{\text{\raise.2ex\hbox{${\scriptstyle | }$}\kern-.34em$($} }
\newcommand\loc{\rm loc\,}
\newcommand\sca{\text{\sc a}}
\newcommand\scb{\text{\sc b}}
\newcommand\scc{\text{\sc c}}
\DeclareMathOperator*{\esssup}{ess\,sup}
\newtheorem{theorem}{Theorem}[section]
\newtheorem{lemma}[theorem]{Lemma}
\newtheorem{proposition}[theorem]{Proposition}
\newtheorem{corollary}[theorem]{Corollary}
\theoremstyle{definition}
\newtheorem{assumption}{Assumption}[section]
\newtheorem{definition}{Definition}[section]
\theoremstyle{remark}
\newtheorem{remark}{Remark}[section]
\newcommand\bL{\mathbb{L}}
\newcommand\bR{\mathbb{R}}
\newcommand\bB{\mathbb{B}}
\newcommand\bT{\mathbb{T}}
\newcommand\bW{\mathbb{W}}
\newcommand\frb{\mathfrak{b}}
\newcommand\frf{\mathfrak{f}}
\newcommand\frl{\mathfrak{l}}
\newcommand\fru{\mathfrak{u}}
\newcommand\frF{\mathfrak{F}}
\newcommand\frH{\mathfrak{H}}
\newcommand\frG{\mathfrak{G}}
\newcommand\cB{\mathcal{B}}
\newcommand\cD{\mathcal{D}}
\newcommand\cF{\mathcal{F}}
\newcommand\cH{\mathcal{H}}
\newcommand\cP{\mathcal{P}}
\newcommand\cR{\mathcal{R}}
\newcommand\cS{\mathcal{S}}
\newcommand\cV{\mathcal{V}}
\newcommand\cW{\mathcal{W}}
 \newcommand{\sumstar}%
 {\operatornamewithlimits{\sum@\kern-.2em\raise1ex\hbox{*}}}
\def\Xint#1{\mathchoice
{\XXint\displaystyle\textstyle{#1}}%
{\XXint\textstyle\scriptstyle{#1}}%
{\XXint\scriptstyle\scriptscriptstyle{#1}}%
{\XXint\scriptscriptstyle\scriptscriptstyle{#1}}%
\!\int}
\def\XXint#1#2#3{{\setbox0=\hbox{$#1{#2#3}{\int}$ }
\vcenter{\hbox{$#2#3$ }}\kern-.6\wd0}}
\def\dashint{\Xint-}
\begin{document}
 
\title[On Stochastic Navier-Stokes Equations] 
{On conditional uniqueness of solutions to stochastic Navier-Stokes Equations}

\author[I. Gy\"ongy]{Istv\'an Gy\"ongy}
\address{School of Mathematics and Maxwell Institute,
University of Edinburgh,
King's  Buildings,
Edinburgh, EH9 3JZ, United Kingdom}
\email{gyongy@maths.ed.ac.uk}

\author[N.V. Krylov]{Nicolai V. Krylov}%
\thanks{}
\address{127 Vincent Hall, University of Minnesota,
Minneapolis,
       MN, 55455, USA}
\email{krylov@math.umn.edu}

\subjclass[2020] {35Q30, 35R60, 60H15}
\keywords{ Navier-Stokes equations, stochastic Navier-Stokes equations, 
weak solutions, conditional 
uniqueness and regularity, Ladyzhenskaya-Prodi-Serrin condition}

\begin{abstract}
 This paper is a continuation of \cite{GK2024}. Here theorems on 
conditional uniqueness and regularity for solutions to stochastic Navier-Stokes 
equations in $\bR^d$ are presented. 
 
\end{abstract}

\maketitle

\mysection{Introduction}

The Navier-Stokes equations, 
\begin{equation} 
                                                                  \label{eq0}
\frac{\partial}{\partial t}u=\nu\Delta -u_{(u)}-\nabla p+D_j\frf_j+f, 
\quad 
\text{div}\, u=0
\end{equation}
for $t\geq 0$ and $x\in\bR^d$, 
with initial condition 
\begin{equation}                                        \label{ini0}
u_{t}\big|_{t=0}=u_{0}
\end{equation}
for the evolution of the velocity and pressure fields 
$u=(u^{1}_t(x),...,u^{d}_t(x))$ 
and $p=p_t(x)$,  
are among the most important equations 
in fluid dynamics 
and are also among the most studied PDEs in the literature.   
Here $\nu$ is a positive constant, 
$u_{(u)}=u^jD_ju$,   
$$
D_j\frf_j
=(\tfrac{\partial}{\partial x^j}\frf^1_j,....,\tfrac{\partial}{\partial x^j}\frf^d_j), 
\quad 
f=(f^1,...,f^d)
$$
are given 
force fields for $t\geq0$ and $x=(x^1,...,x^d)\in\bR^d$, and 
$u_0=(u_0^1,...,u_0^d)$
is a given velocity field on $\bR^d$
such that $\text{div}\, \frf_{j}=\text{div}\, u_{0}=0$.

By the classical results of Hopf \cite{H1951} and Leray \cite{L1934} 
for any $u_{0}\in \cH$ 
at least one weak solution to \eqref{eq0}-\eqref{ini0} exists in the {\it Hopf-Leray 
class}, 
$$
\cW:=L_{\infty}([0,\infty),\cH)\cap L_2([0,\infty),\cV)), 
$$
where 
$$
\cV:=\{u\in W^1_2(\bR^d,\bR^d): {\rm div}\, u=0\}  
$$
and $\cH$ is the closure of $\cV$ in $L_2(\bR^d,\bR^d)$. 
(See, e.g.,  \cite{L1963}, \cite{LSU1967}, \cite{L1969}, \cite{T1977},  
\cite{F2021} and \cite{R2022} for presentations of Leray and Hopf results 
and for further developments.)

Due to a famous theorem of Ladyzhenskaya, the uniqueness of the 
 weak solution in this class is known when $d=2$, and it is an open problem 
for $d\geq3$. There is, however a considerable literature on {\it 
conditional uniqueness} of the solution under various criteria. 
The most well-known criterium is the Ladyzhenskaya-Prodi-Serrin 
condition, which requires that for a $T>0$ the Hopf-Leray weak 
solution be in the space $L_{p,q}=L_{p,q}([0,T]\times\bR^d, \bR^d)$ 
such that 
\begin{equation} 
                                                                                \label{LPS0}
\frac{d}{p}+\frac{2}{q}\leq 1\quad 
 \text{for some $p\in(d,\infty]$ and $q\in[2,\infty)$}, 
\end{equation}
where $L_{p,q}$  denotes the space 
of $\bR^d$-valued functions $v$ on $[0,T]\times\bR^d$ 
such that 
$$
|v|^q_{L_{p,q}}:=\int_0^T|v_t|^q_{L_p}\,dt<\infty. 
$$
Under this condition the uniqueness of the Hopf-Leray 
weak solution on $[0,T]$ was proved by 
Prodi \cite{Prodi1959} and  Serrin \cite{Serrin1962}, 
and smoothness of the solution was obtained by 
Ladyzhenskaya \cite{L1967}.   

Our aim in the present paper is to generalise the conditional 
uniqueness result of Prodi and Serrin and to extend it 
stochastic Navier-Stokes equations 
and to prove also a theorem on conditional regularity of 
their solutions. Our condition
is much weaker than the Ladyzhenskaya-Prodi-Serrin 
condition (see Corollary \ref{corollary LPS}). 

We will consider the following type of stochastic Navier-Stokes equations,  
\begin{align}
du=&\big(\nu\Delta u-u_{(u)}
+\nabla p+D_j\frf_j +f \big)\,dt                                                                  \nonumber\\
&+\big(\sigma\nabla u-\nabla q+h\big)\circ dw, 
\quad
 \text{div}\, u=0,                                                                          \label{NS}\\
u_{t}\big|_{t=0}=&u_{0}, \quad                                                        \label{NSi}
\end{align}
for the random velocity field $u=(u_t^1(x),...,u^d_t(x))$,  
and pressure fields $p=p_t(x)$ 
and $q=q_t(x)$ for $t\in[0,T]$ and $x\in\bR^d$, where $w$ is a Hilbert space-valued 
Wiener process, and $\circ dw_t$ indicates that the corresponding differential 
is understood in the Stratonovich sense. As it was shown 
in \cite{MR2001} and 
\cite{MR2004}, equation \eqref{NS} arises when one follows the classical scheme 
of Newtonian fluid dynamics, assuming that instead of a deterministic dynamics, 
the fluid particles satisfy the stochastic equation
$$
d\eta_t(x)=u_t(\eta_t(x))\,dt
+\sigma_t(\eta_t(x))\circ dw_t, \quad \eta_0(x)=x, 
\quad x\in \bR^d. 
$$ 
In \cite{MR2005} the authors study the Cauchy problem \eqref{NS}-\eqref{NSi} 
with a second order elliptic differential operator instead of $\Delta$,  and with force fields 
$D_j\frf_j$, $f$ and $h$, which may depend also on $u$. 
They show the existence of a solution on some filtered probability space $(\Omega,\cF,(\cF_t)_{t\geq0},P)$ 
carrying an $\cF_t$-Wiener process $w$ with values in a Hilbert space. 
Moreover, in $d=2$ they show the existence 
and uniqueness of a unique strong solution.
The solution $u=(u_t)_{t\in[0,T]}$ in \cite{MR2005} is understood as an $\cH$-valued 
weakly continuous $\cF_t$-adapted process such that $u\in L_2([0,T], \cV)$ (a.s.), 
and almost surely satisfies the equations in a weak sense, similarly 
to the definition of a Hopf-Leray weak solution to \eqref{eq0}-\eqref{ini0}.  

We consider for $d\geq3$ the type of stochastic Navier-Stokes equations 
as in \cite{MR2005}. Adapting the notion of admissible random functions 
from \cite{K2022b} and \cite{{GK2024}}, we introduce the class of admissible 
solutions to them, see Definitions \ref{definition HL} 
and \ref{definition admissible}. Roughly speaking 
an admissible function $u$ is function of $(\omega, t,x)$, 
such that it can be decomposed 
as a sum of two functions, say $u^M$ and $u^B$, 
such that $u^M$, as a function of $x\in\bR^d$, belongs to 
the Morrey space $E_{r,1}$ for an $r\in(2,d]$ and   
its Morrey norm is bounded by a constant, uniformly in 
$(t,\omega)$. The other component, $u^B$ belongs 
to $L_2\big([0,T], L_{\infty}(\bR^d,\bR^d)\big)$ (a.s.). 

First we show that an admissible solution has a 
modification which is a (strongly) continuous $\cH$-valued 
function in $t\in [0,T]$ for every $\omega\in\Omega$. This is 
Theorem \ref{theorem 1} below. Then we present a theorem, 
Theorem \ref{theorem 2} on conditional uniqueness, which in the special case 
of equation \eqref{NS} reads as follows. 

{\em
There is a positive constant $N$ depending only $d$, $r$, such that 
if $u$ is an admissible solution with Morrey norm bounded by $\nu/N$, 
then every admissible solution coincides with $u$.} 

Hence, see 
Corollary \ref{corollary LPS},  we get that if $u$ is a solution 
such that $u\in L_{p,q}$ almost surely for a $(p,q)$ satisfying 
the Ladyzhenskaya-Prodi-Serrin condition \eqref{LPS0}, then 
the uniqueness holds in the class of admissible solutions. 
This is an immediate consequence of Theorem \ref{theorem 2}, 
since it turns out that if $u $ is a function from 
$L_{p,q}$ (a.s.) such the condition \eqref{LPS0} 
holds then $u$ is admissible and admits a Morrey component 
with Morrey norm bounded by a positive constant as small as we wish. 
It is worth noting that the admissible functions form an essentially larger 
space than $L_{p,q}$ with \eqref{LPS0}. 
Thus Theorem \ref{theorem 2} generalises the result of Prodi and Serrin 
in the special case of deterministic Navier-Stokes equations, i.e., 
when $\sigma=0$, $q=0$ and $h=0$, and extends that also to the 
type of stochastic Navier-Stokes equations \eqref{NS}-\eqref{NSi}. 
Under the conditions of Theorem \ref{theorem 2} we present 
also a result, Theorem \ref{theorem 3}, 
on conditional regularity of the solution, when $u_0\in\cV$ (a.s.),   
$\frf=0$, and $\sigma$ and $h$ satisfy appropriate differentiability 
conditions. 
We will prove further results on conditional regularity 
in the continuation of the present paper. 

Stochastic Navier-Stokes equations are also extensively studied 
in the literature. We mention here only a  few publications 
concerning their theory, starting with the pioneering paper  
\cite{BT1973}, followed by \cite{AC1990}, \cite{CC1991}, \cite{FY1992}
\cite{BCF1992}, \cite{CG1994}, \cite{B1995}, \cite{DD2002} and others  
 on solvability of the equations in various set-ups and approaches. 
 Uniqueness results in $d=2$ can be found, e.g., in 
 \cite{AF2002}, \cite{MS2002} \cite{AF2004}, \cite{F2003}, \cite{MR2005}, 
 \cite{M2009}. 
 Existence of local $L_p$-solutions 
 to Navier-Stokes equations with L\'evy noise is studied in \cite{MS2017}. 
 The existence and uniqueness of solutions in an $L_p$ setting 
 for 2D stochastic Navier-Stokes  with jump noise is investigated in 
 \cite{ZBL2019}, the local well-posedness in $L_p$-setting 
 for 3D  Navier-Stokes equations with multiplicative cylindrical noise in the whole space is studied  in \cite{KWX2024}, 

Existence and uniqueness of a global mild solution to random vorticity 
equations associated to stochastic Navier-Stokes equations in $d=3$ 
for small initial vorticity is obtained  in \cite{BR2017}. 
Theorems on ergodicity in $d=2$ are presented in 
\cite{WMS2001}, \cite{WM2001}, \cite{KS2000}, \cite{KS2001}, 
\cite{B2002}, \cite{MH2006},  
and a result on ergodicity in d=3 is obtained  \cite{DD2003}. 
Balance relations for randomly forced 
Navier-Stokes equations on the 2-dimensional torus are proved 
in \cite{KP2005}.  
In \cite{AV2024} well-posedness, regularisation results for the solutions and blow-up criteria 
are established, and a global well-posedness result with high probability for small initial data, in critical spaces is proved for stochastic Navier--Stokes equations on the d-dimensional torus for any $d\geq2$.

For a comprehensive 
treatise on stochastic fluid dynamics we refer to \cite{F2008}.

We conclude with notations used in the paper. 
Let $C_0^{\infty}(\bR^d,\bR^d)$ be the space of $\bR^d$-valued 
compactly supported smooth functions on $\bR^d$ and let  
$\cD$ denote its subspace consisting of the divergence-free functions.
For integers $m\geq0$ and $p\geq1$ we use the notation 
$\bW^m_p=W^m_p(\bR^d,\bR^d)$   
for the Sobolev space of 
$\bR^d$-valued functions $v=(v^1,...,v^d)$ on $\bR^d$, which together 
with their generalised 
derivatives up to order  $m$ are $L_p$-functions. 
We use also the notation 
$\bL_p$ for $\bW^0_p$. 
Note that $\bW^m_2$ 
with the inner product 
$$
(u,v)_{\bW^m_2}:=\sum_{|\alpha|\leq m}^m(D^{\alpha}u,D^{\alpha}u), 
\quad u,v\in \bW^m_2
$$
is a separable Hilbert space, where for any integer $k\geq1$ 
for $\bR^k$-valued functions 
$v$ and $w$ on $\bR^d$ we use the notation 
$$
(v,w)=\int_{\bR^d}v^i(x)w^i(x)\,dx 
$$
when the  integral is well-defined. 
If $B$ is a ball in $\bR^{d}$ by $|B|$ we mean its volume
and set
$$
\dashint_{B}f\,dx=\frac{1}{|B|}\int_{B}f\,dx.
$$        
The derivatives of $Dv$ of $\bR^m$-valued functions in $x\in\bR^d$ we view 
as functions with values in the $m\times d$ matrices,  $(D_iv^n)=(v^{ni})$, 
and we treat the space of $m\times d$ matrices as the $md$-dimensional 
Euclidean space as well. For functions $f$ on $\bR^d$, 
with values in a tensor space $\bT$,  
the notation $|f|_{L_2}$ is defined by
$$
|f|_{L_2}^2=\int_{\bR^d}|f(x)|^2_{\bT}\,dx,
$$
where $|f(x)|_{\bT}$ denotes the Hilbert-Schmidt norm of the tensor $f(x)$. 

Let 
$\cH$ and $\cV$ be the divergence free subspaces of 
$\bL_2:=\bW^0_2$ and $\bW^1_2$, respectively, i.e.,
$\cH$ is the closure of $\cD$ in $\bL_{2}$, and 
$$
\cV:=\{u\in\bW^1_2: {\rm div}\, u=0\}.
$$
Let $\cS$ denote the orthogonal projection 
of $\bL_2$ onto $\cH$, and define the operator $\cR=I-\cS$, 
where $I$ is the identity operator.  

For functions $\varphi$ 
and vector fields $u=(u^1,...,u^d)$ on $\bR^d$ 
we use the notations 
$$
D_i\varphi=\frac{\partial}{\partial x^i}\varphi, \,\,(i=1,2,...,d), \quad
\nabla\varphi =D\varphi=(D_1\varphi,...,D_d\varphi), 
$$
$$
\varphi_{(u)}=u^jD_j\varphi,
$$
 $Du$ for the Jacobian matrix $(Du)^{ij}=D_ju^i$,  
and $D^2u$ for the tensor $(D^2u)^{ijl}=D_{i}D_{j}u^l$, $i,j,l=1,2,...,d$.
Unless otherwise stated, thorough the paper  
we use the summation convention 
with respect to 
repeated integer-valued indices. Thus $u_{(u)}$ 
denotes the $\bR^d$-valued 
function 
$$
(u^jD_ju^1,u^jD_ju^2,...,u^jD_ju^d)
$$
on $\bR^d$. 
For normed spaces $B$ we use the notation 
$\ell_2(B)$ for the space of sequences $c=(c_n)_{n=1}^{\infty}$ of 
elements $c_n\in B$, with the norm 
$|c|_{\ell_2(B)}=(\sum_n|c_n|^2_{B})^{1/2}$. We write 
$\ell_2$ instead of $\ell_2(\bR)$. 

All random elements in this paper are defined on a fixed   
filtered probability space $(\Omega,\cF,(\cF_t)_{t\geq0}, P)$ 
carrying a sequence of independent $\cF_t$-Wiener 
processes $(w^k)_{k=1}^{\infty}$. We assume that $\cF$ is $P$-complete 
and that $\cF_0$ contains all the $P$-zero sets of $\cF$. We use the notation 
$\cP$ for the predictable $\sigma$-algebra on $\Omega\times[0,\infty)$.
For a topological space $S$ the notation $\cB(S)$ means the Borel 
$\sigma$-algebra on $S$.

\mysection{Formulation of the main theorems}
                                                                         \label{section formulation}
Consider the equations                                                                          
\begin{align} 
                                                                  \label{eq}
du_t=&\big(D_i(a^{ij}_{t}D_{j}u_t+\frf_t^i(u_t))
+f_t(u_t,Du_t)
-u_{t(u_t)}-\nabla p_t+\gamma^{ki}_tD_{i}q_t^k\big)\,dt                                 \nonumber\\
&+(\sigma^{ik}_{t}D_{i}u_t+h_t^{k}(u_t)
-\nabla q^k_t)\,dw^k_t, 
\quad 
\text{div}\, u_t=0 
\end{align} 
on $\Omega\times[0,T]\times\bR^d$, with initial condition 
\begin{equation}                                       
                                                             \label{ini}
u(0,x)=u_0(x),\quad x\in\bR^d  
\end{equation}
for a velocity field 
$u=(u_t^{1}(t,x),...,u^{d}(t,x))$ 
and pressure fields $p=p_t(x)$ 
and $q=(q^k_t(x))_{k=1}^{\infty}$ on $\Omega\times[0,T]\times\bR^d$. 

We assume that

(i)
the coefficient  $a=(a^{kl})$ is $\bR^{d\times d}$-valued, 
$\gamma^i=(\gamma^{ik})_{k=1}^{\infty}$ is an 
$\ell_2(\bR^d)$-valued  
and $\sigma^i=(\sigma^{ik})_{k=1}^{\infty}$ 
is an $\ell_2$-valued $\cP\otimes\cB(\bR^d)$ measurable function  
on $\Omega\times[0,T]\times\bR^d$ for each $i=1,2,...,d$; 

(ii) the functions $\frf^i$ and $f$ are $\bR^d$-valued, $\frf^i$ is   
a $\cP\otimes\cB(\bR^d\times\bR^d)$-measurable mapping on 
$\Omega\times[0,T]\times\bR^{d}\times\bR^d$, 
and $f$ is a 
$\cP\otimes\cB(\bR^d\times\bR^d\times\bR^{d\times d})$-measurable 
mapping on 
$\Omega\times[0,T]\times\bR^d\times\bR^{d}\times\bR^{d\times d}$, 
for each $i=1,2,...,d$.  
The function $h=(h^k)_{k=1}^{\infty}$ is an $\ell_2(\bR^d)$-valued 
$\cP\otimes\cB(\bR^d\times\bR^d)$-measurable mapping on 
$\Omega\times[0,T]\times\bR^d\times\bR^d$.

\begin{definition}
                                                                         \label{definition HL}
A $\cV$-valued predictable process $u=(u_t)_{t\in[0,T]}$ is a solution to 
\eqref{eq}-\eqref{ini}  if 
$u\in L_2([0,T],\cV)\cap L_{\infty}([0,T],\cH)$ (a.s.),   
and for each $v\in \cV \cap \bL_{ d} $ 

\begin{align}
(u_t,v)=&(u_0,v)
 -\int_0^t\big[(a_s^{ij}D_ju_s+\frf^i_s(u_s),D_iv)
 +(u_{s(u_s)},v)\big]\,ds                                                      \nonumber\\
&+\int_0^t (\gamma^{ki}_sD_iq^k_s
+f_s(u_s,Du_s), v)\,ds      \nonumber\\
 &+\int_0^t(\sigma_s^{jk}D_ju_s+h^k_s(u_s),v)\,dw^k_s,                    \label{sol1a}
 \end{align}
 holds for $P\otimes dt$-almost every $(\omega,t)\in\Omega\times[0,T]$, 
where 
 \begin{equation}
                                                                                         \label{sol1b}
 \nabla q^k_t=\cR(\sigma_t^{jk}D_ju_t+h^k_t(u_t))
 \quad 
 \text{$P\otimes dt\otimes dx$-a.e. \,\, for every $k\geq1$}.                                                                                     
\end{equation}
 We call $(u_t)_{t\in[0,T]}$ an $\cH$-solution if it is an 
 $\cH$-valued $\cF_t$-adapted continuous process 
 such that $u\in L_2([0,T], \cV)$ (a.s.), and 
 almost surely \eqref{sol1a} holds 
 for all $t\in[0,T]$ and $v\in \cV\cap \bL_{ d} $,  and
  \eqref{sol1b} is satisfied. 
   \end{definition} 
 \begin{remark}
                                                                                        \label{remark q}
 We note that if Assumption \ref{assumption 1} and 
 \eqref{h} below hold for $h$, then equation \eqref{sol1b} ensures that for any solution $u$
 $$
 M^k_t(u_t)-\nabla q^k_t=\cS M^k_t(u_t)\in\cH\quad \text{($P\otimes dt\otimes dx$-a.e), 
 \quad for $k=1,2,...$}, 
 $$ 
where 
\begin{equation}
                                                                                          \label{M}
M^k_t(u):=\sigma_t^{ik}D_iu+g^k_t(u) \quad\text{for $u\in\cV$}. 
\end{equation}
 Moreover, by equation \eqref{sol1b} we have 
\begin{equation}
                                                                                          \label{q}
\gamma^{ki}D_iq^k=\gamma^{ki}(\cR M^k(u))^i
\quad
\text{for each $k\geq1$},
\end{equation}
where $(\cR M^k(u))^i$ is the $i$-th coordinate of $\cR M^k(u)$, 
which closes the equation \eqref{sol1a} for $u$.
\end{remark}  
 \begin{remark}
If $q>1$ such that $\tfrac{1}{q}=\tfrac{1}{2}-\tfrac{1}{d}$, 
 then $\tfrac{1}{q}+\tfrac{1}{2} +\tfrac{1}{d}=1$ 
 and thus by H\"older's inequality and 
 by the Sobolev inequality $|w|_{L_q}\leq N(d)|w|_{W^1_2}$ 
we have 
 $$
 |(v_{(u)},w)|\leq |u|_{L_q}|Dv|_{\bL_2}|w|_{\bL_d}
 \leq N(d)|u|_{\bW^1_2}|Dv|_{\bL_2}|w|_{\bL_d}<\infty
 $$
for any $u,v,w\in \bW^1_2$. 
Hence for $(u_{s(u_s)},v)$ in \eqref{sol1a} we have 
$$
\int_0^T|(u_{s(u_s)},v)|\,ds\leq N(d)|v|_{\bL_d}\int_0^T|u_s|^2_{\bW^1_2}\,ds<\infty 
\,\text{(a.s.) for any $v\in\bL_d$}.
$$
This is why test functions $v$ from $\cV\cap L_d$ are used. 
 \end{remark} 
 
We  make the following assumptions. 
\begin{assumption}                                                      \label{assumption 1} 
(i) There is a constant $\delta>0$ such that 
\begin{equation}
                            \label{4.30.1}
|a|\leq \delta^{-1},
\quad
\big(a^{ij}-\tfrac{1}{2}\sigma^{ik}\sigma^{jk}\big)
\lambda^i\lambda^j\geq \delta |\lambda|^2
\quad
\text{for all $\lambda\in\bR^d$},\omega,t,x.
\end{equation}
(ii) There is a nonnegative predictable process $\vartheta_t$, $t\in[0,T]$, such that 
\begin{equation*}
\int_0^T\vartheta_t^2\,dt<\infty
\end{equation*}
and 
\begin{equation}                                                                                                                                                                                                                                                                        
                                                                                 \label{gamma}                                
|\gamma_t(x)|^2:=\sum_{i=1}^d\sum_{k=1}^{\infty}|\gamma_t^{ki}(x)|_{\bR^d}^2
\leq \vartheta_t^2        \quad\text{for all $\omega,t,x$}.
\end{equation}
\end{assumption}
\begin{remark}
                                                                    \label{remark sigma}
Note that by Assumption \ref{assumption 1}\,(i) we have 
$$
|\sigma_t|^2=\sum_{i=1}^d\sum_{k=1}^{\infty}|\sigma^{ik}|^2\leq N^2
$$ 
for a nonnegative constant $N=N(d,\delta)$. 
\end{remark}
\begin{assumption}
                                                      \label{assumption 2} 
There exist nonnegative predictable processes  
$\lambda$, $\kappa$ and $\chi$, and nonnegative 
$\cP\otimes\cB(\bR^d)$-measurable functions $\frF$ 
and $\frG$ on 
$\Omega\times[0,T]\otimes\bR^d$ such that 
\begin{equation}
                                                            \label{lambda}
\int_0^T(\lambda_t^2+\chi_t^2+\kappa_t)\,dt<\infty, \,\,
\int_0^T(|\frF_t|^2_{L_2}+|\frG_t|_{L_2})
\,dt<\infty 
\,\,\,(a.s.), 
\end{equation}
and

\noindent
(i) for all $\omega\in\Omega$, $t\in[0,T]$, 
$x,u\in\bR^d$ and $z\in\bR^{d\times d}$
\begin{equation}
                                                                                        \label{h}
|\frf_t(x,u)|+|h_t(x,u)|\leq \lambda_t|u|+\frF_t(x), 
\end{equation}
$$
|f_t(x,u,z)|\leq \kappa_t|u|+\chi_t |z|+\frG_t(x), 
$$
(ii) for all $\omega\in\Omega$, $t\in[0,T]$, $x$, 
$u_i\in\bR^d$ and $z_i\in\bR^{d\times d}$ 
($i=1,2$) 
\begin{equation}                                                              \label{dh}
|\frf_t(x,u_1)-\frf_t(x,u_2)|+|h_t(x,u_1)-h_t(x,u_2)|
\leq\lambda_t|u_1-u_2|, 
\end{equation}
\begin{equation}                                                              \label{f}
|f_t(x,u_1,z_1)-f_t(x,u_2,z_2)|
\leq \kappa_t|u_1-u_2|+\chi_t|z_1-z_2|.  
\end{equation}
\end{assumption}
\begin{assumption} 
                                                     \label{assumption 3}
The initial condition $u_{0}$ is an 
$\cF_0$-measurable random variable in $\cH$. 
\end{assumption}

We assume $d\geq 3$. 
To formulate our results we define the following class of functions 
for a fixed $\rho_0\in(0,1]$, where we use    
the notation $\bB_{\rho}$ for the set of 
balls in $\bR^d$ with radius $\rho$. 
\begin{definition} 
                                                                         \label{definition admissible}
A real-, vector-, or tensor-valued function $f$ defined on 
$\Omega\times[0,T]\times\bR^d$ is called admissible  if 
$f=f^M+f^B$ for some $\cP\otimes\cB(\bR^d)$-measurable 
functions $f^M$ and $f^B$, such that for some $r\in(2,d]$  
with a  {\em constant\/} $\hat f$ and a nonnegative predictable 
function $\bar f$ on $\Omega\times[0,T]$ 
we have 
\begin{equation} 
                                         \label{morrey}
\left(\dashint_{B_{\rho}}|f^M(t,x)|^{r}
\,dx\right)^{1/r}\leq \hat f\rho^{-1}
\quad\text{for $B_{\rho}\in\bB_{\rho}$ and $\rho\leq \rho_0$}, 
\end{equation}
for all $(\omega,t)\in\Omega\times[0,T]$, 
\begin{equation}
                                                              \label{B}
\esssup_{\bR^d} | f_t^B(x)|\leq \bar f_t 
\quad \text{for all $(t,\omega)$, \,\,and  
\quad 
$\sup_{\Omega}\int_0^T \bar f^{2}_t\,dt<\infty$}.
\end{equation}
\end{definition} 
\smallskip

Observe that $f(x)=|x|^{-1}$
satisfies \eqref{morrey} with finite $\hat f$ for any $r\in[1,d)$
(and $\rho_{0}=\infty$).

For $r\in[1,\infty)$ and $\lambda\geq0$ we denote by 
$E_{r,\lambda}$ the {\it Morrey space} of functions $f$ on $\bR^d$ with values 
in a Euclidean space, such that 
$$
|f|_{r,\lambda}:=\sup_{\rho\in(0,\rho_0],B\in\bB_{\rho}}\rho^{\lambda}
\left(\dashint_{B_{\rho}}|f(x)|^r\,dx\right)^{1/r}<\infty.
$$
Then \eqref{morrey} means that $|f^M|_{r,1}\leq \hat f$.

We say that an $\bR^d$-valued function $u$ 
is an admissible solution to \eqref{eq}-\eqref{ini} if it is an admissible function 
and $(u_t)_{t\in[0,T]}$ is a solution to \eqref{eq}-\eqref{ini} in the sense of 
Definition \ref{definition HL}. 
\begin{theorem}            
                                                              \label{theorem 1}
Let Assumptions \ref{assumption 1},  \ref{assumption 2} (i)
and \ref{assumption 3} hold. Assume $u$
is an admissible  solution to \eqref{eq}-\eqref{ini}.  
Then $u$ is an $\cH$-solution in the sense that it has a 
$P\otimes dt$-modification, denoted also by $u$,  which is an $\cH$-solution. 
Moreover, 
$$
E\sup_{t\leq T}|e^{-\varphi_t}u_t|^2_{\cH}
+E\int_0^T\alpha_t|e^{-\varphi_t}u_t|^2_{\cH}\,dt 
+E\int_0^T|e^{-\varphi_t}u_t|^2_{\cV}\,dt  \leq  NE|u_0|^2_{\cH}
$$
\begin{equation}
                                                       \label{u estimate}   
 +NE \int_0^T|e^{-\varphi_t}\frF_t|^2_{L_2}\,dt
 +NE \int_0^T|e^{-\varphi_t}F_t|^2_{L_2}\,dt
 +NE\Big( \int_0^T|e^{-\varphi_t}\frG_t|_{L_2}\,dt\Big)^2,                                                                                                       
\end{equation}
where $N=N(d,\delta)$ is a constant and 
$$
\varphi_t=\int_0^t\alpha_s\,ds, 
\quad \alpha_s=N'(\lambda_s^2+\chi_s^2+\theta_s^2+\kappa_s)+\mu_s
$$
with a constant $N'=N'(\delta)$ and any 
predictable process $\mu\geq0$ such that 
$$
\int_0^T\mu_t\,dt<\infty\quad \text{for all $\omega\in\Omega$.} 
$$

\end{theorem}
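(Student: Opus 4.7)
Both assertions follow from a single It\^o identity for $e^{-2\varphi_t}|u_t|^2_{\cH}$: the estimate \eqref{u estimate} falls out once the coefficient of $|u|^2_{\cH}\,dt$ on the right-hand side is absorbed by the weight $e^{-2\varphi_t}$, and the $\cH$-continuous modification is read off because the right-hand side of the integrated equation is manifestly the sum of a Bochner-integral (continuous in $t$) and a continuous $\cH$-valued stochastic integral. The whole task therefore reduces to (i) producing the It\^o identity rigorously and (ii) extracting the coercive bound algebraically.

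\textbf{Producing the It\^o identity.} First I would verify that, for an admissible $u$, the drift in \eqref{sol1a} lies in $L_2(\Omega\times[0,T];\cV^*)$ and the diffusion in $L_2(\Omega\times[0,T];\ell_2(\cH))$. The convective term $u_{(u)}$ is the delicate piece: integration by parts gives $(u_{(u)},v)=-(u\otimes u,Dv)$, and with the admissible splitting $u=u^M+u^B$ the Morrey multiplier inequality
\begin{equation*}
\int_{\bR^d}|u^M|^2|\nabla v|^2\,dx\le N|u^M|^2_{r,1}|v|^2_{\cV},\qquad r\in(2,d]
\end{equation*}
(available, e.g., in \cite{K2022b} and \cite{GK2024}), together with $|u^B_t|_{L_\infty}\le\bar u_t$, yields the required $\cV^*$-control. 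A spatial mollification of \eqref{sol1a} followed by a variational It\^o formula then gives
\begin{align*}
|u_t|^2_{\cH}&=|u_0|^2_{\cH}+2\int_0^t\big[-(a^{ij}_sD_ju_s,D_iu_s)-(\frf^i_s(u_s),D_iu_s)+(f_s(u_s,Du_s),u_s)\\
&\quad +(\gamma^{ki}_sD_iq^k_s,u_s)\big]\,ds+\int_0^t\sum_k|\cS M^k_s(u_s)|^2\,ds\\
&\quad +2\sum_k\int_0^t(\cS M^k_s(u_s),u_s)\,dw^k_s,
\end{align*}
the pressure terms $\nabla p_s$ and $\nabla q^k_s$ being killed by $u_s\in\cH$, and the convective pairing $(u_{s(u_s)},u_s)$ vanishing by $\mathrm{div}\,u_s=0$ (preserved in the mollified limit via the multiplier bound).

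\textbf{Absorption and the weighted estimate.} Next I would use \eqref{4.30.1} and the pointwise bound $\sum_k|\cS M^k|^2\le\sum_k|\sigma^{jk}D_ju+h^k(u)|^2$ to obtain
\begin{equation*}
2(a^{ij}D_ju,D_iu)-\sum_k|\cS M^k(u)|^2\ge 2\delta|Du|^2_{L_2}-N\big(\lambda_t^2|u|^2_{\cH}+|\frF_t|^2_{L_2}\big),
\end{equation*}
and combine this with Assumption \ref{assumption 2} to dominate the remaining $\frf$, $f$ and $(\gamma^{ki}D_iq^k,u_s)$ contributions (the latter via \eqref{q} and Remark \ref{remark sigma}) by $\varepsilon|Du|^2_{L_2}+C(\chi_t^2+\kappa_t+\vartheta_t^2)|u|^2_{\cH}+\text{forcing}$. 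Applying It\^o's formula to $e^{-2\varphi_t}|u_t|^2_{\cH}$ with $\alpha_t$ chosen as in the statement absorbs the coefficient of $|u|^2_{\cH}\,dt$ precisely; taking suprema, using Burkholder--Davis--Gundy on the martingale, and taking expectations then yields \eqref{u estimate}.

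\textbf{Main obstacle.} The hardest step is the Morrey multiplier inequality above and its use to preserve the cancellation $(u_{(u)},u)=0$ under mollification. This is precisely what admissibility is designed for: it produces enough integrability for $u\otimes u$ to pair with $Dv$ whenever $v\in\cV$, and stability of that pairing under spatial convolution. Once this is in hand, the coercive algebra, the pressure bookkeeping via \eqref{sol1b}, and the closure of the It\^o formula are technical but standard.
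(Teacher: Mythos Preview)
Your overall strategy matches the paper's: put the equation in the triple $\cV\hookrightarrow\cH\hookrightarrow\cV^*$, obtain an It\^o identity for $|u_t|^2_{\cH}$, use admissibility and the Morrey multiplier bound to kill $(u_{(u)},u)$, do the coercive algebra, weight by $e^{-2\varphi}$, and finish with BDG and stopping times. The paper does not mollify; it applies directly the abstract It\^o formula from \cite{GK2024} (Theorem \ref{theorem ito} here), but the content is the same.

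There is one genuine gap. You assert that the full drift lies in $L_2([0,T];\cV^*)$, and this is what a standard variational It\^o formula would require; but under Assumption \ref{assumption 2}(i) it is false. Only $\int_0^T\kappa_t\,dt<\infty$ and $\int_0^T|\frG_t|_{L_2}\,dt<\infty$ are assumed, so $|f_s(u_s,Du_s)|_{\bL_2}\le\kappa_s|u_s|_{\bL_2}+\chi_s|Du_s|_{L_2}+|\frG_s|_{L_2}$ is in $L_1[0,T]$ but not $L_2[0,T]$; likewise the bounded piece $u^{Bj}D_ju$ of the convective term and the $\gamma^{ki}D_iq^k$ term are only $L_1([0,T];\bL_2)$ since $\bar u,\vartheta,|Du|_{L_2}$ are each merely $L_2$ in time. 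The paper therefore splits the drift: a $\cV^*$-valued part $v^*_t$ (carrying $a^{ij}D_ju$, $\frf^i(u)$, and the Morrey piece $u^{Mj}D_ju$) in $L_2([0,T];\cV^*)$, and an $\cH$-valued part $F_t$ (carrying $u^{Bj}D_ju$, $f(u,Du)$, $\gamma^{ki}D_iq^k$) in $L_1([0,T];\cH)$. Theorem \ref{theorem ito} is tailored to exactly this $(v^*,F)$ structure; your proposal as written would not close. Your mollification route could be repaired, but you must still isolate and carry the $L_1(\cH)$ piece through the limit separately.

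A minor slip: the displayed Morrey inequality is wrong as stated. Lemma \ref{lemma 12.17.1} gives $\int|u^M|^2|\varphi|^2\le N\hat u^2\int|D\varphi|^2+\ldots$, with $|\varphi|^2$ on the left, not $|\nabla v|^2$; applied after Cauchy--Schwarz to $\frb(u^M,u,v)$ with $\varphi=v$ (or, in your integrated-by-parts form, with $\varphi=u$), it yields the $\cV^*$-bound. Putting $\nabla v$ on the left would require second derivatives of $v$.
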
 
\begin{theorem}            
                                                              \label{theorem 2}
Let Assumption \ref{assumption 1},  \ref{assumption 2}  and 
\ref{assumption 3} hold, and assume that $u$ and 
$v$ are admissible  solutions
 to \eqref{eq}-\eqref{ini} on $[0,T]$.
Then there exists a (finite) constant 
$N=N(d,r)>0$ such that if 
\begin{equation}
                                                     \label{N}
\min(\hat u,\hat v)< \delta/N, 
\end{equation}
then almost surely $u(t)=v(t)$ for every $t\in[0,T]$. 
\end{theorem}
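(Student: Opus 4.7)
The plan is to let $w:=u-v$, derive an equation for $w$ from Definition~\ref{definition HL} applied to both $u$ and $v$, and apply It\^o's formula to $e^{-2\varphi_{t}}|w_{t}|_{\cH}^{2}$ with $\varphi$ as in Theorem~\ref{theorem 1}, augmented by an additional $\mu_{t}$ depending on the essentially bounded components of the admissible decomposition. Since Theorem~\ref{theorem 1} supplies continuous $\cH$-valued modifications for both $u$ and $v$, the process $|w_{t}|_{\cH}^{2}$ is a continuous semimartingale; to justify the It\^o identity at the level of the weak formulation (whose test functions must lie in $\cV\cap\bL_{d}$) one mollifies spatially and passes to the limit in the bounds below.

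The crux is the algebraic identity
$$
u_{(u)}-v_{(v)}=v^{j}D_{j}w+w^{j}D_{j}u=u^{j}D_{j}w+w^{j}D_{j}v,
$$
which, together with $\mathrm{div}\,u=\mathrm{div}\,v=0$, leaves, after testing against $w$, either the residual $(w^{j}D_{j}u,w)$ or $(w^{j}D_{j}v,w)$ at our choice, since $\int v^{j}D_{j}|w|^{2}\,dx=\int u^{j}D_{j}|w|^{2}\,dx=0$. I would pick the decomposition that isolates whichever of $u,v$ has the smaller Morrey component, so, assuming without loss of generality $\hat u\le\hat v$, we keep $(w^{j}D_{j}u,w)$; this is exactly why the minimum appears in \eqref{N}.

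In the resulting It\^o inequality, the parabolic drift combined with the It\^o correction from the $\sigma^{ik}D_{i}w\,dw^{k}$ term yields, by Assumption~\ref{assumption 1}(i) and the observation that the pressure projection $\cR$ in \eqref{sol1b} cannot increase the quadratic variation, the coercive lower bound $-2\delta|\nabla w|_{L_{2}}^{2}$. The Lipschitz hypotheses \eqref{dh}--\eqref{f} control the $\frf,f,h$ differences by an expression dominated by $\alpha_{t}|w|_{\cH}^{2}+\tfrac{\delta}{4}|\nabla w|_{L_{2}}^{2}$, absorbed respectively by the $\alpha_{t}$ piece of $\varphi_{t}$ and by the coercivity. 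The nonlinear residual is bounded via Cauchy--Schwarz and the admissible decomposition $u=u^{M}+u^{B}$:
$$
|(w^{j}D_{j}u,w)|\le\bigl(|u^{M}w|_{L_{2}}+\bar u_{t}|w|_{L_{2}}\bigr)|\nabla w|_{L_{2}}\le N(d,r)\hat u|\nabla w|_{L_{2}}^{2}+\tfrac{\delta}{4}|\nabla w|_{L_{2}}^{2}+C\bar u_{t}^{2}|w|_{L_{2}}^{2},
$$
where the Fefferman--Phong-type Morrey multiplier inequality $|u^{M}w|_{L_{2}}\le N(d,r)|u^{M}|_{r,1}|\nabla w|_{L_{2}}$ (inherited from \cite{GK2024}) is the central technical input; the first two terms are absorbed into the coercivity provided we set $N:=4N(d,r)$ in \eqref{N}, while the third is absorbed by augmenting $\mu_{t}$ by a term of order $\bar u_{t}^{2}$, which is legitimate since $\int_{0}^{T}\bar u_{t}^{2}\,dt<\infty$ by \eqref{B}.

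After absorption the integrated inequality reads $d(e^{-2\varphi_{t}}|w_{t}|_{\cH}^{2})\le dM_{t}$ for a local martingale $M$ with $w_{0}=0$, so standard localisation gives $E[e^{-2\varphi_{t\wedge\tau_{n}}}|w_{t\wedge\tau_{n}}|_{\cH}^{2}]=0$ for all $n$ and $t$, whence $w\equiv 0$ a.s.; the continuity afforded by Theorem~\ref{theorem 1} then promotes this to pathwise identity for every $t\in[0,T]$. The main obstacle is the Morrey multiplier inequality for $r\in(2,d]$: the strict inequality $r>2$ is precisely what makes admissibility strictly weaker than the Ladyzhenskaya--Prodi--Serrin condition (cf.\ Corollary~\ref{corollary LPS}), and is what permits absorption of $N(d,r)\hat u|\nabla w|_{L_{2}}^{2}$ into the coercive term. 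A secondary subtlety is the mollification needed to apply It\^o's formula when $w_{t}$ is only known to lie in $\cV$, and the justification that the nonlinear quadratic form $(w^{j}D_{j}u,w)$ passes to the limit under the sole assumption of Morrey-plus-$L_{\infty}$ regularity on $u$.
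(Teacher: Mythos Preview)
Your proposal is correct and follows essentially the same route as the paper: derive the It\^o identity for $|u-v|_{\cH}^{2}$ via Theorem~\ref{theorem ito}, reduce the nonlinear difference (after the divergence-free cancellations and the WLOG choice $\hat u\le\hat v$) to the single term $\frb(\fru,u,\fru)$, estimate it through the antisymmetry $\frb(\fru,u,\fru)=-\frb(\fru,\fru,u)$ and the Morrey multiplier bound of Lemma~\ref{lemma 12.17.1} (packaged in the paper as Corollary~\ref{corollary v}), absorb $N\hat u|D\fru|_{L_{2}}^{2}$ into the coercive term and the $\bar u_{t}^{2}$ piece into the exponential weight, and conclude by the nonnegative-local-martingale argument. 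The only imprecision is that the multiplier inequality also produces a lower-order term $N\rho_{0}^{-2}\hat u^{2}|w|_{L_{2}}^{2}$ (cf.\ \eqref{12.17.1}), but, as you already anticipate, this goes harmlessly into $\mu_{t}$.
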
 
\begin{corollary}
                                \label{corollary LPS}
Let Assumptions  \ref{assumption 1}, \ref{assumption 2}  hold, 
and assume that 
$u$ is a solution to \eqref{eq}-\eqref{ini} such that 
almost surely 
$$
\int_0^T|u_{s}|^q_{L_p}\,ds<\infty 
$$
for some $d<p\leq\infty$ and $2\leq q<\infty$ satisfying 
the Ladyzhenskaya-Prodi-Serrin 
condition 
\begin{equation} 
                                                                                  \label{LPS}
\frac{d}{p}+\frac{2}{q}\leq 1.
\end{equation}
Then $u$ is an $\cH$-solution, and if $v$ is any admissible solution 
then for its $\cH$-valued continuous modification, denoted  also by $v$, 
we have that almost surely 
$u_t=v_t$ for all $t\in[0,T]$.
\end{corollary}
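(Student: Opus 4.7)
The strategy is to show that the hypothesis forces $u$ to be admissible in the sense of Definition \ref{definition admissible} with Morrey component of arbitrarily small norm, so that Theorems \ref{theorem 1} and \ref{theorem 2} apply directly.

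I would take $r=d$ and introduce a level-set decomposition of $u$ at an $|u_t|_{L_p}$-dependent threshold: fix a parameter $C>0$ and set
\[
K_t := C\,|u_t|_{L_p}^{p/(p-d)},\qquad u_t^B(x) := u_t(x)\,\mathbf{1}_{|u_t(x)|\le K_t},\qquad u_t^M := u_t - u_t^B.
\]
On $\{|u_t|>K_t\}$ the elementary inequality $|u_t|^d \le K_t^{d-p}|u_t|^p$ (which uses $d<p$) together with $|B_\rho|\sim\rho^d$ yields, for any ball $B_\rho$,
\[
\Bigl(\dashint_{B_\rho}|u_t^M|^d\,dx\Bigr)^{1/d}
\le c_d\,\rho^{-1}\bigl(K_t^{d-p}|u_t|_{L_p}^p\bigr)^{1/d}
= c_d\,C^{-(p-d)/d}\,\rho^{-1},
\]
so that \eqref{morrey} is satisfied with the deterministic constant $\hat u = c_d C^{-(p-d)/d}$. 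Taking $C$ large, $\hat u$ becomes smaller than any prescribed number, in particular smaller than $\delta/N$ for the $N$ of Theorem \ref{theorem 2}. The borderline case $p=\infty$ is handled by simply setting $u^M\equiv 0$.

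For the $L_\infty$ component I would observe $|u_t^B|_\infty\le K_t$ and note that \eqref{LPS} is equivalent to $2p/(p-d)\le q$. H\"older's inequality in $t$ then gives
\[
\int_0^T K_t^2\,dt \le C^2\,T^{\,1-2p/[q(p-d)]}\Bigl(\int_0^T|u_t|_{L_p}^q\,dt\Bigr)^{2p/[q(p-d)]}<\infty\quad\text{(a.s.),}
\]
providing the bound $\bar u_t := K_t$ of Definition \ref{definition admissible} except for its uniformity in $\omega\in\Omega$.

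The main obstacle is precisely this last point: the integral above is only almost surely finite, whereas Definition \ref{definition admissible} demands a uniform bound. I would resolve it by a standard stopping-time localization. Set
\[
\tau_n := T\wedge\inf\Bigl\{t\ge 0:\int_0^t|u_s|_{L_p}^q\,ds>n\Bigr\},
\]
so that $\tau_n\nearrow T$ almost surely and on $[0,\tau_n]$ the integral $\int_0^{\tau_n} K_t^2\,dt$ is bounded by a deterministic constant depending only on $n$, $C$, $p$, $q$, $T$. Applying Theorem \ref{theorem 1} to the $\tau_n$-stopped problem yields the continuous $\cH$-valued modification of $u$ on $[0,\tau_n]$, and Theorem \ref{theorem 2} applied to $u$ and $v$ stopped at $\tau_n$ gives $u_{t\wedge\tau_n}=v_{t\wedge\tau_n}$ almost surely for every $t\in[0,T]$. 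Letting $n\to\infty$ produces both assertions of the corollary.

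The hard step is thus the localization: one must check that Theorems \ref{theorem 1} and \ref{theorem 2} apply verbatim to the $\tau_n$-stopped problem, which reduces to verifying that stopping preserves the solution structure of Definition \ref{definition HL} and the admissibility of $v$. Once this is granted, the Morrey estimate and the LPS--H\"older computation above are routine.
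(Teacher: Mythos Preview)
Your level-set decomposition $u=u^M+u^B$ with threshold $K_t=C\,|u_t|_{L_p}^{p/(p-d)}$ and $r=d$ is exactly the one used in the paper (there with the letter $\zeta_t$ in place of your $K_t$), and the Morrey and time-integral estimates you write are the same computations. So the core argument is identical.

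Where you differ is in the treatment of the uniformity in $\omega$: you correctly observe that $\int_0^T K_t^{2}\,dt$ is only almost surely finite, whereas Definition~\ref{definition admissible} literally asks for $\sup_\Omega\int_0^T\bar u_t^{2}\,dt<\infty$, and you propose a stopping-time localization to remedy this. The paper's proof simply records the almost-sure finiteness and stops there, without addressing the $\sup_\Omega$ requirement. Your extra step is a legitimate precaution; the localization works as you describe, since stopping at $\tau_n$ preserves the solution identity \eqref{sol1a}--\eqref{sol1b} and the admissibility of $v$, and Theorems~\ref{theorem 1} and~\ref{theorem 2} apply verbatim on $[0,\tau_n]$. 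Alternatively, an inspection of the proofs of those two theorems shows that the only use made of $\bar u$ is the pathwise bound $\int_0^T\bar u_t^{2}\,dt<\infty$ (it enters through $\zeta_t$ in the exponential weight $\phi_t$ and through the $H$-norm of $F_t$), so the $\sup_\Omega$ clause is in fact not needed for the conclusion and the paper's shortcut is harmless.
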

 
\begin{proof}
We show (following  Remark 7.1 in \cite{GK2024}) that $u$ is admissible, with $\hat u\leq \varepsilon$ for as small $\varepsilon>0$ as 
we wish, which by virtue of condition \eqref{N} proves the corollary. 
Indeed, if $p<\infty$ then let $r=d$ and set 
$$
\zeta_t=c\Big(\int_{\bR^{d}}
|u_t(x)|^{p }\,dx\Big)^{1/(p -d )} 
\quad\text{with a constant $c$}, 
$$
and define 
$u^{M}_t(x)=u_t(x){\bf1}_{|u_t(x)|\geq \zeta_t}$,
$u^B_t=u_t-u^M_t=u_t{\bf1}_{|u_t|\leq \zeta_t}$. 
Then for any $B\in\bB_{\rho}$ and $\varepsilon>0$ 
$$
\dashint_{B }|u^{M}_t(x)|^{d}\,dx
\leq \zeta_t^{d-p }
\dashint_{B }|u_t(x)|^{p }\,dx
\leq N(d)c^{d-p }\rho^{-d}\leq \varepsilon \rho^{-d}
$$
for sufficiently large $c=c(d,p,\varepsilon)$, 
and clearly, 
$|u_t^{B} |\leq \zeta_t $ such that  
$$
\int_{0}^{T}\zeta_t^{2}\,dt
=c^{2}\int_{0}^{T}|u_t|^{2p/(p-q) }_{L_p}
\,dt<\infty,\,\, \text{since $2p/(p-q)\leq q$.}
$$
 If $p=\infty$ then we take $u^M=0$, $u^B=u$, 
$\hat u=0$ and $\bar u_t:=|u_t|_{L_{\infty}}$,  and notice that 
$$
\int_0^T\bar u_t^2\,dt=\int_0^T|u_t|^2_{L_{\infty}}\,dt<\infty\,\,\text{(a.s.), 
\,\,since $q\geq 2$},
$$
which completes the proof of the corollary. 
\end{proof}
\begin{remark}
                                                                           \label{remark1.14.3}
 If $u\in L_{d,\infty}$ (a.s.) 
 then by the above calculations $u$ is still 
admissible for $r=d$, but to be able to apply the corollary we 
need further conditions ensuring  
that the Morrey norm of $u^M$ is sufficiently small, uniformly 
in $(\omega,t)$.  We remark that in the case $d=3$  results 
on uniqueness and smoothness 
for Hopf-Leray week solutions to 
deterministic Navier-Stokes equations \eqref{eq0}-\eqref{ini0} 
(with $\frf=0$, $f=0$) are obtained in \cite{ESS2003} when 
the solutions belong to $L_{d,\infty}$, and 
for any $d\geq3$ such results are proved in \cite{DoDa2009}.

\end{remark}

\begin{remark}
                                                                             \label{remark 2.14.3}
The case $(d/p)+(2/q)<1$ in \eqref{LPS} is often 
 called the subcritical case and $(d/p)+(2/q)=1$ is called the critical case. 
In connection with this we note that there are admissible functions $u$ 
such that $r<d$  and $u^{M}(t,\cdot)\not\in L_{r+\varepsilon,\loc}$  
no matter how small $\varepsilon>0$ is. Thus Theorem \ref{theorem 2} 
covers examples also in the ``supercritical'' case.                                           
\end{remark}

\mysection{Preliminaries}
                                       \label{section prelim}

First we present a version of the It\^o formula, 
Theorem 3.1 from \cite{GK2024}. 
To this end let 
 $(V,(\cdot,\cdot)_{V})$ and 
$(H,(\cdot,\cdot)_{H})$ be some 
separable Hilbert spaces such that $V$ continuously 
and densely embedded in $H$. 

Assume we are given $V $-valued
functions $v,v^{\ast}$, an $H$-valued
function $F$, and an $\ell_2(H)$-valued function 
$G=(G^k)$, which are predictable functions 
on $\Omega\times[0,\infty)$ such that 
\begin{equation}
                                                   \label{8.9.5}
\int_{0}^{T}  | v_{t}|^2_{V}+|v^{*}_{t}  | ^{2}_{V }+|G_t|^2_{\ell_2(H)}\,dt
+\int_{0}^{T}  | F_{t}  |  _{H }\,dt<\infty 
\quad (a.s.) 
\end{equation}
for any $T\in(0,\infty)$.  
Let $v_{0}$ be an $H$-valued $\cF_0$-measurable 
random variable, and assumed that for every 
$v\in V$
we have
\begin{equation}
                                                 \label{4.9.7}
( v ,v_t)_{H }=( v ,v_{0})_{H }+\int_{0}^{t}
[( v ,v^{*}_s)_{V }+( v ,F_s)_{H}]\,ds
+\int_{0}^{t}(v,G^k_s)_{H}\,dw^k_s
\end{equation}
almost surely for $dt$-almost all $t\in(0,\infty)$. 
Then the following theorem holds. 

\begin{theorem}
                                  \label{theorem ito}
Under the above assumptions there exists a continuous
$H$-valued $\cF_t$-adapted process $(u_t)_{t\geq0}$
such that 

(i) $u_{t}=v_{t}$ for $P\otimes dt$-almost everywhere;

(ii) with probability one we have 
\begin{equation} 
                                                 \label{4.9.8}
( v ,u_{t})_{H }=( v ,v_{0})_{H }+\int_{0}^{t}
[( v ,v^{*}_{s})_{V }+( v ,f_{s})_{H}]\,ds
+\int_0^t(v,g_s^k)_{H}\,dw^k_s
\end{equation}
for all $v\in V$ and $t\geq0$;

(iii) with probability one 
\begin{align}
| u_{t}  | ^{2}_{H }= & | v_{0}  | ^{2}_{H }+
2\int_{0}^{t}
[(u_{s},v^{*}_{s})_{V }+(u_{s},F_{s})_{H}+\tfrac{1}{2}|G_s|^2_{\ell_2(H)}]
\,ds                                                                                        \nonumber\\
&+2\int_0^t(u_s,G^k_s)_{H}\,dw^k_s
\quad\text{for all $t\geq0$.}                                       \label{ito}
\end{align}
\end{theorem}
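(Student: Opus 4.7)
My plan is to prove the Itô formula by a time-mollification argument, in the spirit of the variational methods of Pardoux and Krylov--Rozovskii. A useful preliminary observation is that via the Riesz isomorphism $A\colon V\to V^*$ defined by $\langle Av,w\rangle = (v,w)_V$, the $V$-valued drift $v^*$ is equivalent to a $V^*$-valued drift $Av^*$ with $|Av^*_s|_{V^*}=|v^*_s|_V$; in particular \eqref{4.9.7} is essentially a classical variational evolution equation, and the only nonstandard feature is that $F$ is merely $L^1$-in-time, which is harmless because $F$ takes values in $H$, not in $V^*$.

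First I would construct the $H$-continuous modification $u$. Fix a nonnegative smooth mollifier $\zeta_\epsilon$ on $\bR$ supported in $(-\epsilon,0)$ and set $v^\epsilon := v\ast\zeta_\epsilon$ (convolution in $t$), and analogously $v^{*,\epsilon}$, $F^\epsilon$, $G^{k,\epsilon}$, extending all functions by $0$ for negative times. Since $\zeta_\epsilon$ is supported in the past, $v^\epsilon$ remains $\cF_t$-adapted. A stochastic-Fubini computation shows that $(w,v^\epsilon_t)_H$ is absolutely continuous in $t$ for each $w\in V$ and that $v^\epsilon$ itself, viewed as a $V$-valued function, satisfies a regularized evolution equation driven by $v^{*,\epsilon}$, $F^\epsilon$, and a smoothed semimartingale built from $G^\epsilon$. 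The classical chain rule for absolutely continuous Hilbert-space-valued functions then yields a mollified analogue of \eqref{ito} for $|v^\epsilon_t|^2_H$.

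Next I would show that $\{v^\epsilon\}$ is Cauchy in $C([0,T];H)$ almost surely. Applying the mollified Itô identity to $v^\epsilon-v^{\epsilon'}$, taking $\sup_{t\leq T}$, and combining the Burkholder--Davis--Gundy inequality with Young's inequality yields
\begin{equation*}
E\sup_{t\leq T}|v^\epsilon_t-v^{\epsilon'}_t|^2_H \leq C\bigl(\|v^{*,\epsilon}-v^{*,\epsilon'}\|_{L^2(0,T;V)}^2 + \|F^\epsilon-F^{\epsilon'}\|_{L^1(0,T;H)}^2 + \|G^\epsilon-G^{\epsilon'}\|_{L^2(0,T;\ell_2(H))}^2\bigr),
\end{equation*}
which tends to $0$ as $\epsilon,\epsilon'\to 0$ by \eqref{8.9.5} and standard mollifier convergence. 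The limit $u$ is $\cF_t$-adapted, $H$-valued, continuous in $t$, and agrees with $v$ in $L^2(0,T;H)$, which settles (i).

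Finally, passing $\epsilon\downarrow 0$ in the mollified weak identity and in the mollified Itô identity yields (ii) and (iii) for every $t\geq0$: the deterministic integrals converge by Cauchy--Schwarz in $V$ (for the $v^*$ term), by $L^1$--$L^\infty$ pairing in $H$ (for the $F$ term), and by dominated convergence in $\ell_2(H)$ (for the quadratic-variation term in (iii)), while the stochastic integral converges uniformly in $t$ in probability via BDG once $v^\epsilon\to u$ in $C([0,T];H)$ and $G^\epsilon\to G$ in $L^2(0,T;\ell_2(H))$. The main obstacle I anticipate is the rigorous handling of the mollified stochastic integral via stochastic Fubini and the extraction of the uniform-in-$t$ estimate that couples $L^2$-in-time $V$-data, $L^1$-in-time $H$-data, and $L^2$-in-time $\ell_2(H)$-data; the BDG-plus-Young combination above is designed precisely to absorb this asymmetry and produce a norm on $C([0,T];H)$ that closes on the data in \eqref{8.9.5}.
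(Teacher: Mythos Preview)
The paper does not prove this theorem at all: it is quoted verbatim as Theorem~3.1 of \cite{GK2024} and used as a black box. So there is no argument in the paper to compare your sketch against, and your task is really to supply an independent proof.

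Your mollification outline has a genuine gap in the treatment of the stochastic term. Convolving $v$ in time with a smooth kernel produces an \emph{absolutely continuous} $V$-valued path $v^{\epsilon}$; the classical chain rule then gives
\[
|v^{\epsilon}_{t}|^{2}_{H}=|v^{\epsilon}_{0}|^{2}_{H}
+2\int_{0}^{t}(v^{\epsilon}_{s},\dot v^{\epsilon}_{s})_{H}\,ds,
\]
with \emph{no} quadratic-variation term. The term $\tfrac12|G_{s}|^{2}_{\ell_{2}(H)}$ in \eqref{ito} cannot appear at the mollified level; it has to emerge in the limit $\epsilon\downarrow0$ from the pairing of $v^{\epsilon}_{s}$ with the time-derivative of the mollified martingale part. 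Stochastic Fubini shows that the mollification of $\int_{0}^{t}(w,G^{k}_{s})_{H}\,dw^{k}_{s}$ is \emph{not} $\int_{0}^{t}(w,G^{k,\epsilon}_{s})_{H}\,dw^{k}_{s}$ but rather a process whose integrand depends on the upper limit $t$, so $(M^{w})^{\epsilon}$ is not a martingale and your ``smoothed semimartingale built from $G^{\epsilon}$'' does not have the structure you need to read off a mollified analogue of \eqref{ito}. Extracting the It\^o correction from $2\int(v^{\epsilon}_{s},\dot v^{\epsilon}_{s})_{H}\,ds$ as $\epsilon\to0$ is exactly the hard part of the theorem, and your sketch does not address it. The usual route (Krylov--Rozovskii, and presumably \cite{GK2024}) avoids this by projecting onto finite-dimensional subspaces of $V$, where the genuine finite-dimensional It\^o formula supplies the correction term for free, and then passing to the limit in the projection.

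A secondary issue: your Cauchy estimate is written with an expectation on the left but pathwise norms of the data on the right, while \eqref{8.9.5} is only an almost-sure condition. You need to localise with stopping times before invoking BDG; otherwise both sides may be infinite.
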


  \begin{lemma}
                                         \label{lemma 12.17.1}
If $f$ is real-valued admissible function  
then 
\begin{equation}
                                             \label{12.17.1}
\int_{\bR^{d}}|f_{t}|^{2}|\varphi|^{2}\,dx
\leq N\hat f^{2}\int_{\bR^{d}}|D\varphi|^{2}\,dx
+\int_{\bR^{d}}\big(N\rho_{0}^{-2}\hat f^{2}+2  
\bar f^{2}_{t} \big)|\varphi|^{2}\,dx 
\end{equation}    
for all $\varphi\in C_0^{\infty}$ with a constant $N=N(d,r)$. 
\end{lemma}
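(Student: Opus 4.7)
The plan is to split the admissible function as $f=f^M+f^B$, bound the contribution of the bounded component $f^B$ pointwise via \eqref{B}, and reduce the remaining task to a weighted Sobolev (Fefferman--Phong / Morrey) inequality for the Morrey component $f^M$. Since $|f|^2\leq 2|f^M|^2+2|f^B|^2$ and $|f^B_t(x)|\leq \bar f_t$, the $f^B$ part contributes at most $2\bar f_t^2\int|\varphi|^2\,dx$, which exactly matches the $2\bar f_t^2$ term on the right-hand side of \eqref{12.17.1}; the factor $2$ in front of $|f^M|^2$ is absorbed into the constant $N$.

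The remaining core task is to prove
\begin{equation*}
\int_{\bR^d}|f^M_t|^2|\varphi|^2\,dx\leq N\hat f^2\int_{\bR^d}|D\varphi|^2\,dx+N\rho_0^{-2}\hat f^2\int_{\bR^d}|\varphi|^2\,dx.
\end{equation*}
First I would upgrade \eqref{morrey} by H\"older's inequality with exponent $r/2>1$ to the critical Morrey growth
\begin{equation*}
\int_{B_\rho}|f^M_t|^2\,dx\leq N(d,r)\hat f^2\rho^{d-2}\qquad\text{for every ball $B_\rho$ with $\rho\leq\rho_0$,}
\end{equation*}
so that the positive measure $d\mu_t:=|f^M_t|^2\,dx$ has Hardy-type scaling $\rho^{d-2}$ at scales below $\rho_0$. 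Next I would localise via a smooth partition of unity $\{\zeta_j\}$ subordinate to a cover of $\bR^d$ by balls $B_j$ of radius $\rho_0/2$ with bounded overlap and $|\nabla\zeta_j|\leq N\rho_0^{-1}$, set $\varphi_j:=\varphi\zeta_j$, and reduce to the local inequality $\int_{B_j}|f^M|^2|\varphi_j|^2\,dx\leq N\hat f^2\int_{B_j}|\nabla\varphi_j|^2\,dx$. Summing over $j$ by bounded overlap, together with $|\nabla\varphi_j|^2\leq 2\zeta_j^2|\nabla\varphi|^2+N\rho_0^{-2}|\varphi|^2$, then produces the global estimate with the stated $\rho_0^{-2}\int|\varphi|^2$ correction.

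In the critical case $r=d$ the local inequality drops out of H\"older with exponents $(d/2,d/(d-2))$ applied to $|f^M|^2$ and $|\varphi_j|^2$, together with the Sobolev embedding $\bW^1_2\hookrightarrow L_{2d/(d-2)}$ and the bound $|f^M_t|_{L_d(B_j)}\leq N\hat f$ extracted from \eqref{morrey}. For the subcritical range $r\in(2,d)$ the exponent $2r/(r-2)$ needed on the test function strictly exceeds the Sobolev exponent $2d/(d-2)$, so this elementary chain breaks down; I would instead invoke the Fefferman--Phong / Chiarenza--Frasca trace inequality for $|f^M|^2$ viewed as a Morrey function with scaling parameter $r/2>1$ and growth exponent $d-r$. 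The standard proof proceeds via the pointwise Riesz-potential bound $|\varphi_j(x)|\leq N\,I_1(|\nabla\varphi_j|)(x)$ combined with Adams' trace inequality $\int|I_1 g|^2\,d\mu\leq N[\mu]_M|g|^2_{L_2}$ against Morrey measures of growth $\rho^{d-2}$.

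The main obstacle is precisely this Fefferman--Phong step in the subcritical range $r\in(2,d)$: the failure of $\bW^1_2$ to embed into $L_{2r/(r-2)}$ when $r<d$ blocks the elementary H\"older--Sobolev argument, so one must route through the Riesz-potential trace inequality (or an equivalent weighted-Hardy / capacity argument) to close the estimate. By contrast the $\rho_0$-localisation is routine and contributes only the harmless $\rho_0^{-2}\int|\varphi|^2$ correction.
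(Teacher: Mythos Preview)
Your proposal is correct. The paper does not give a self-contained argument here: its entire proof is the single sentence ``This lemma follows from Lemma 3.5 of \cite{Kr_23}.'' What you have written is essentially an unpacking of that cited result---the splitting $|f|^2\le 2|f^M|^2+2|f^B|^2$ isolates the $2\bar f_t^2$ term exactly as stated, and the Morrey part is handled by the Fefferman--Phong/Chiarenza--Frasca trace inequality (with the $\rho_0$-localisation producing the $\rho_0^{-2}$ correction), which is precisely the content of the lemma from \cite{Kr_23} that the paper invokes. So you are not taking a different route; you are supplying the proof that the paper outsources.
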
 

This lemma follows from Lemma 3.5 of \cite{Kr_23}.   

Define  
\begin{equation}
 \frb(u,v,w):=\int_{\bR^d}u^j(x)D_jv^i(x)w^i(x)\,dx
\end{equation}
 for $\bR^d$-valued Borel functions $u$, $v$ and $w$ on $\bR^d$ when 
 the generalised derivatives $D_jv^i$ are functions on $\bR^d$,  
 and the integral is well-defined. 

\begin{corollary}
                                                              \label{corollary b}
(i) Let $u$ be a $\bR^d$-valued admissible function 
and let  $v, w\in\bW^1_2$ 
Then 
we have 
$$
\frb(u_t,v,w)
\leq N\hat u(|Dv|_{\bL_2}
+\rho_0^{-1}|v|_{\bL_2})|Dw|_{\bL_2}   
$$
$$
+N\bar u_t|Dv|_{\bL_2}|w|_{\bL_2},
$$
(ii) Let $w$ be a $\bR^d$-valued admissible function 
and let  $v, u\in\bW^1_2$. Then  
$$
\frb(u_t,v,w)\leq N\hat w(|Dv|_{\bL_2}
+\rho_0^{-1}|v|_{\bL_2})|Du|_{\bL_2}   
$$
\begin{equation*}
                                                         \label{w}
 +N\bar w_t|Dv|_{\bL_2}|u|_{\bL_2},                                                        
\end{equation*}
where $N$ is a constant depending only on 
$d$ and $r$. 
\end{corollary}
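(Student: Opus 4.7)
The plan is to split the admissible $u$ as $u=u^M+u^B$ and control $\frb(u^M_t,v,w)$ and $\frb(u^B_t,v,w)$ separately, starting from the pointwise bound $|\frb(\cdot,v,w)|\le\int|u||Dv||w|\,dx$.

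The bounded component is immediate: since $|u^B_t(x)|\le\bar u_t$ uniformly in $x$, Cauchy--Schwarz gives
\[
|\frb(u^B_t,v,w)|\le\bar u_t\int_{\bR^d}|Dv||w|\,dx\le\bar u_t|Dv|_{\bL_2}|w|_{\bL_2},
\]
which is precisely the $\bar u_t$ term. For the Morrey contribution one applies Lemma~\ref{lemma 12.17.1} to the admissible scalar function $|u^M|$, in whose decomposition the bounded component is taken to be zero; taking the test function componentwise and extending from $C_0^\infty$ to $\bW^1_2$ by density yields
\[
|u^M_t\varphi|_{\bL_2}\le N\hat u\bigl(|D\varphi|_{\bL_2}+\rho_0^{-1}|\varphi|_{\bL_2}\bigr)\qquad\text{for all }\varphi\in\bW^1_2.
\]
To exploit this with $\varphi=v$ one needs the derivative placed on $w$ rather than on $v$, so the next step is the integration-by-parts identity $\frb(u^M,v,w)=-\frb(u^M,w,v)$, valid in the divergence-free setting relevant here. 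A Cauchy--Schwarz then gives $|\frb(u^M,w,v)|\le|Dw|_{\bL_2}|u^M_t v|_{\bL_2}$, and applying the previous display with $\varphi=v$ produces exactly the asserted term $N\hat u(|Dv|_{\bL_2}+\rho_0^{-1}|v|_{\bL_2})|Dw|_{\bL_2}$. Part (ii) is obtained by the symmetric argument: the admissibility now lies on $w$, so Lemma~\ref{lemma 12.17.1} is applied with $f=|w^M|$ and $\varphi=u$, while the bounded piece $w^B$ is handled directly as above.

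The main obstacle is justifying the integration-by-parts step. Since $u^M$ lies in the Morrey space $E_{r,1}$ and need not be weakly differentiable, one works with the mollification $u^M_\varepsilon=u^M*\rho_\varepsilon$ (which preserves the Morrey bound up to an absolute constant), applies the exact identity $\frb(u^M_\varepsilon,v,w)=-\frb(u^M_\varepsilon,w,v)-\int(\mathrm{div}\,u^M_\varepsilon)\,v\cdot w\,dx$, and exploits that $u$ is divergence free in the paper's setting, so that the decomposition may be chosen with $\mathrm{div}\,u^M=0$ and the last term vanishes. Passing to the limit $\varepsilon\downarrow 0$ using the uniform estimate from Lemma~\ref{lemma 12.17.1} then delivers the claimed bound.
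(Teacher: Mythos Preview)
Your integration-by-parts detour introduces a hypothesis that is neither stated nor justifiable here. The corollary places no divergence-free assumption on $u$ (or on $w$ in part~(ii)); it is formulated for an arbitrary admissible $u$ and arbitrary $v,w\in\bW^1_2$. Your claim that ``the decomposition may be chosen with $\mathrm{div}\,u^M=0$'' is not part of the setup, and even when $u$ itself happens to be divergence free, forcing both $u^M$ and $u^B$ to be divergence free while retaining the Morrey bound on $u^M$ and the pointwise bound on $u^B$ would require boundedness of the Leray projector on $E_{r,1}$ and on $L_\infty$, neither of which is available. So the identity $\frb(u^M,v,w)=-\frb(u^M,w,v)$ is unjustified, and your treatment of the Morrey term does not go through.

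The paper's proof avoids this entirely. One pairs $(u^j_t w^i)$ with $(D_j v^i)$ directly via Cauchy--Schwarz to obtain
\[
|\frb(u_t,v,w)|\le|Dv|_{\bL_2}\Bigl(\int_{\bR^d}|u_t|^2|w|^2\,dx\Bigr)^{1/2},
\]
and then applies Lemma~\ref{lemma 12.17.1} (extended by density to $\varphi\in W^1_2$) componentwise with $f=u^j_t$ and $\varphi=w^i$; part~(ii) is identical with the roles of $u$ and $w$ interchanged in the lemma. No integration by parts, no divergence-free hypothesis. Following this route literally one obtains $N\hat u\,\rho_0^{-1}|w|_{\bL_2}|Dv|_{\bL_2}$ rather than $N\hat u\,\rho_0^{-1}|v|_{\bL_2}|Dw|_{\bL_2}$ for the middle term; the form printed in the corollary appears to be a misprint, but either version is adequate for all subsequent uses in the paper.
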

\begin{proof}
It is easy to see that Lemma \ref{lemma 12.17.1} holds also 
for all $\varphi\in W^1_2$. 
Hence using first the Cauchy-Schwarz-Bunyakovsky
inequality for the integral of the product of functions 
$u^{j}_tw^i$ and $D_jv^i$ and then applying 
\eqref{12.17.1} to $|u^{j}_t|^2|w^i|^2$ for each $i,j$,  
we obtain this corollary. We get \eqref{w} in the same way. 
\end{proof}

  \begin{lemma}
                                                                      \label{lemma antisymmetry}
Let $u$ be an $\bR^d$-valued admissible function and let 
$v,w\in\bW^1_2$. 
Then for all $(\omega,t)$ 
we have 
$|\frb(u_t,v,w)|<\infty$. 
Moreover, if $u_t\in\cV$ then 
$$
\frb(u_t,v,w)=-\frb(u_t,w,v).  
$$
\end{lemma}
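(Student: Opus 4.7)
My plan is to derive the finiteness of $|\frb(u_t,v,w)|$ directly from Corollary \ref{corollary b}(i), and then to establish the antisymmetry by approximating $v$ and $w$ by smooth compactly supported test functions, applying the classical integration-by-parts identity for smooth approximations, and passing to the limit using the bilinear estimate of Corollary \ref{corollary b}(i).

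Finiteness is immediate: applying Corollary \ref{corollary b}(i) with the admissible $u$ and $v,w\in\bW^1_2$ bounds $|\frb(u_t,v,w)|$ by a finite combination of the Sobolev norms $|v|_{\bW^1_2}$, $|w|_{\bW^1_2}$ with coefficients depending on $\hat u$ and $\bar u_t$, all finite for every fixed $(\omega,t)$.

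For the antisymmetry when $u_t\in\cV$, I would first treat the identity for smooth compactly supported test functions. Choose sequences $v_n,w_n\in C_0^\infty(\bR^d,\bR^d)$ with $v_n\to v$ and $w_n\to w$ in $\bW^1_2$. For each $n$ the product $u_t^j v_n^i w_n^i$ belongs to $L_1(\bR^d)$ and is compactly supported, so its weak divergence integrates to zero; expanding by the Leibniz rule and using that $D_j u_t^j=0$ in $L_2(\bR^d)$ (because $u_t\in\cV$), I get
\begin{equation*}
\frb(u_t,v_n,w_n)+\frb(u_t,w_n,v_n)=-\int_{\bR^d}(D_j u_t^j)\, v_n^i w_n^i\,dx=0.
\end{equation*}
To extend to arbitrary $(v,w)\in\bW^1_2\times\bW^1_2$, I would invoke Corollary \ref{corollary b}(i) twice (once with the given ordering, once with the arguments swapped) to conclude that both $(v,w)\mapsto\frb(u_t,v,w)$ and $(v,w)\mapsto\frb(u_t,w,v)$ are jointly continuous bilinear forms on $\bW^1_2\times\bW^1_2$, so the identity passes to the limit $n\to\infty$.

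The only mildly delicate point is justifying the Leibniz rule and integration by parts when $u_t$ is merely in $\bW^1_2$ while $v_n,w_n\in C_0^\infty$. I would handle this by first mollifying $u_t$ to $u_t*\phi_\varepsilon$, for which the classical identity is immediate, and then letting $\varepsilon\to 0$ using the $L_2$-convergence $u_t*\phi_\varepsilon\to u_t$ and $D(u_t*\phi_\varepsilon)\to Du_t$ together with the boundedness of $v_n$, $w_n$ and their first derivatives.
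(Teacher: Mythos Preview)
Your proposal is correct and follows essentially the same route as the paper: finiteness from Corollary \ref{corollary b}, then integration by parts on smooth compactly supported approximants combined with $\operatorname{div}u_t=0$, and passage to the limit via the continuity estimate of Corollary \ref{corollary b}. The paper approximates only $v$ (keeping $w$ fixed) rather than both arguments, and does not bother mollifying $u_t$---since $u_t\in\bW^1_2$ and the test functions are $C_0^\infty$, the Leibniz rule and vanishing of the boundary term hold directly---but these are cosmetic differences, not a different strategy.
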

\begin{proof}
Clearly, $|\frb(u_t,v,w)|<\infty$ 
by Corollary \ref{corollary b}. For $\varepsilon>0$ let 
$\varphi_{\varepsilon}\in C_0^{\infty}(\bR^d,\bR^d)$ such that 
$|v_t-\varphi_{\varepsilon}|_{\bW^1_2}\leq\varepsilon$. Then 
by integration by parts, by the Leibniz rule 
and using that ${\rm div}\, u_t=0$ we get 
\begin{equation}
                                                                 \label{1.12.2}
\int_{\bR^d}u^j_tD_j\varphi^i_{\varepsilon}w^i\,dx
=-\int_{\bR^d}\varphi_{\varepsilon}^iD_j(w^iu_t^j)\,dx 
=-\int_{\bR^d}u^j_tD_jw^i\varphi_{\varepsilon}^i\,dx. 
\end{equation}
By Corollary \ref{corollary b} we have 
$$
|\frb(u_t,v-\varphi_{\varepsilon},w)|\leq N\hat u(|Dv-D\varphi_{\varepsilon}|_{\bL_2}
+\rho_0^{-1}|v-\varphi_{\varepsilon}|_{\bL_2})|Dw|_{\bL_2}
$$
$$ 
+N\bar u_t|Dv-D\varphi_{\varepsilon}|_{\bL_2}|w|_{\bL_2},  
$$
which shows that 
$$
\frb(u_t,\varphi_{\varepsilon},w)\to \frb(u_t,v,w)\quad \text{for $\varepsilon\to0$}.
$$
In the same way we get 
$\lim_{\varepsilon\to0}\frb(u_t,w, \varphi_{\varepsilon})=\frb(u_t,w, v)$. 
Thus letting $\varepsilon\to0$ in \eqref{1.12.2} we finish the proof 
of the lemma. 
\end{proof}
\begin{corollary}
                                                                        \label{corollary 0}
Let $u$ be an $\bR^d$-valued admissible function such that $u_t\in\cV$  
for all $(\omega,t)$. Then 
$\frb(u_t,v,v)=0$ for $v\in \bW^1_2$, $(\omega,t)\in\Omega\times[0,T]$. 
\end{corollary}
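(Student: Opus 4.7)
The plan is to derive the corollary as an immediate consequence of Lemma \ref{lemma antisymmetry}. Under the hypotheses of the corollary, $u$ is an $\bR^d$-valued admissible function with $u_t \in \cV$ for every $(\omega,t)$, and $v \in \bW^1_2$; these are precisely the hypotheses of the lemma with both the second and third arguments taken to be $v$. First I would invoke Corollary \ref{corollary b} to ensure that $|\frb(u_t, v, v)| < \infty$, so that the trilinear form is unambiguously defined and can be manipulated as a real number. Then, applying Lemma \ref{lemma antisymmetry} with $w = v$ yields
$$
\frb(u_t, v, v) = -\frb(u_t, v, v),
$$
which upon transposing gives $2\frb(u_t, v, v) = 0$ and hence the claim $\frb(u_t, v, v) = 0$ for every $v \in \bW^1_2$ and every $(\omega, t) \in \Omega \times [0,T]$.

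There is essentially no obstacle at this stage: all the real work has been performed in establishing Lemma \ref{lemma antisymmetry}, whose proof combines the Morrey-norm control from Lemma \ref{lemma 12.17.1}, the product estimates of Corollary \ref{corollary b}, and an approximation of $v \in \bW^1_2$ by test functions $\varphi_\varepsilon \in C_0^\infty(\bR^d, \bR^d)$, together with integration by parts using $\mathrm{div}\, u_t = 0$. Once that antisymmetry identity is available for arbitrary $\bW^1_2$ arguments, the vanishing of the diagonal form $\frb(u_t, v, v)$ is merely the standard observation that any bilinear form which is antisymmetric in its last two arguments has zero diagonal.
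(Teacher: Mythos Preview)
Your proposal is correct and follows exactly the approach intended by the paper: the corollary is stated immediately after Lemma~\ref{lemma antisymmetry} with no separate proof, precisely because setting $w=v$ in the antisymmetry identity $\frb(u_t,v,w)=-\frb(u_t,w,v)$ gives $\frb(u_t,v,v)=-\frb(u_t,v,v)$ and hence the result. Your additional remarks on finiteness via Corollary~\ref{corollary b} and on the ingredients of Lemma~\ref{lemma antisymmetry} are accurate but not strictly needed, since finiteness is already part of the conclusion of that lemma.
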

\begin{corollary}
                                                                        \label{corollary v}
Let $u$ and $v$ be $\bR^d$-valued admissible function such that $u_t\in\cV$,  
and $v,w\in \bW^{1}_{2}$ for all $(\omega,t)$. Then 
$$
\frb(u_t,v,w)\leq N\hat v(|Dw|_{\bL_2}
+\rho_0^{-1}|w|_{\bL_2})|Du|_{\bL_2}   
$$
\begin{equation*}
                                                         \label{v}
 +N\bar v_t|Dw|_{\bL_2}|u_t|_{\bL_2}                                                         
\end{equation*}
with a constant $N=N(d,r)$.
\end{corollary}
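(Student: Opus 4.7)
The plan is to reduce this to Corollary \ref{corollary b}(ii) by exploiting the antisymmetry that comes from the divergence-free condition $u_t\in\cV$. The present statement has the admissible function $v$ sitting in the \emph{middle} slot of $\frb$, whereas Corollary \ref{corollary b}(ii) handles the case where the admissible function sits in the \emph{third} slot. Antisymmetry swaps these two slots, converting one case into the other.

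Concretely, since $u_t\in\cV$ and $v,w\in\bW^1_2$, Lemma \ref{lemma antisymmetry} applies and yields
$$
\frb(u_t,v,w)=-\frb(u_t,w,v).
$$
I would then apply Corollary \ref{corollary b}(ii) to the right-hand side with the following identification of variables: the admissible function (called $w$ in the statement of Corollary \ref{corollary b}(ii)) is our $v$, the middle argument (called $v$ there) is our $w$, and the first argument (called $u$ there) is our $u_t$. This is legitimate because $v$ is admissible by hypothesis, while $w\in\bW^1_2$ and $u_t\in\cV\subset\bW^1_2$ play the roles of the two Sobolev functions. The resulting bound is exactly
$$
|\frb(u_t,w,v)|\leq N\hat v\bigl(|Dw|_{\bL_2}+\rho_0^{-1}|w|_{\bL_2}\bigr)|Du_t|_{\bL_2}+N\bar v_t|Dw|_{\bL_2}|u_t|_{\bL_2},
$$
with $N=N(d,r)$, which transfers back to $\frb(u_t,v,w)$ with no change in constants.

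There is essentially no obstacle: the work was done already in Lemma \ref{lemma antisymmetry} (which relies on admissibility of $u$ only through the auxiliary fact $|\frb(u_t,v,w)|<\infty$ from Corollary \ref{corollary b}(i), ensuring the integration-by-parts identity makes sense) and in the Morrey-type estimate of Lemma \ref{lemma 12.17.1} which underlies Corollary \ref{corollary b}. The only thing to be careful about is verifying that the hypotheses of Lemma \ref{lemma antisymmetry} are met, namely $u_t\in\cV$ (given) and $v,w\in\bW^1_2$ (given), so the antisymmetry identity is valid pointwise in $(\omega,t)$. Once that identity is in hand, the conclusion is a single substitution into Corollary \ref{corollary b}(ii).
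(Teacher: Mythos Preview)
Your proposal is correct and follows essentially the same approach as the paper: the paper's proof consists of the single sentence ``We use that $\frb(u_t,v,w)=-\frb(u_t,w,v)$ and apply Corollary \ref{corollary b},'' which is exactly the antisymmetry-then-substitute argument you have spelled out in detail.
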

\begin{proof}
We use that $\frb(u_t,v,w)=-\frb(u_t,w,v)$ and apply 
Corollary \ref{corollary b}.
\end{proof}

\mysection{Proof of Theorems \ref{theorem 1} and \ref{theorem 2}}

\begin{proof}[Proof of Theorem \ref{theorem 1}]
Let $u$ be an admissible solution on $[0,T]$. 
We set $H=\cH$, $V=\cV$, denote 
the conjugate of $V$ by $V^{*}$ and                                  
the duality between $v\in V$ and $v^{*}\in V^{*}$
by $\langle v, v^{*}\rangle$. Note that $v^{\ast}\in V^{\ast}$ 
can be identified with $Qv^{\ast}\in V$ such that 
$\langle v, v^{\ast}\rangle=(v,Qv^{\ast})_V$ holds for every $v\in V$, 
and thus $|v^{\ast}|_{V^{\ast}}=|Qv|_V$. 

We will apply Theorem \ref{theorem ito} to prove 
Theorem \ref{theorem 1}. 
By definition of the solution, 
\begin{equation}  
                                                                            \label{1.8.10.24}
\int_0^T|u(t)|^2_V\,dt<\infty\quad (a.s.).
\end{equation}
We are going to define $v^{\ast}_t\in V^{\ast}$, $F_t\in H$ and 
$G_t=(G_t^k)\in \ell_2(H)$  by requiring 
\begin{equation}
                                                                                        \label{v*}
\langle v,v^{\ast}_t\rangle
=-(a^{ij}_tD_ju_t+\frf^i_t(u_t), D_iv)-(u^{M j}D_ju_t,v),
\end{equation}
\begin{equation}
                                                                                             \label{F}
(v, F_t)_H=
(-u_t^{Bj}D_ju+f_t(u,Du)+\gamma_t^{kj}D_jq^k_t,v)
\end{equation}
and 
\begin{equation}
                                                                                               \label{G}
(v,G^k_t)_H=(\sigma_t^{jk}D_ju_t+h^k_t(u),v) 
\quad \text{for $k\geq1$}, 
\end{equation}
respectively for all $v\in\cV$.
By Assumptions \ref{assumption 1}, \ref{assumption 2} (i) and 
Corollary \ref{corollary b} 
we have 
$$
|(a^{ij}_tD_ju_t+\frf^i_t(u_t), D_iv)|
\leq (\delta^{-1}|Du|_{L_2}+\lambda_t|u_t|_{\bL_2}+|\frF_t|_{L_2})|v|_{V}, 
$$
$$
|(u^{M j}D_ju_t,v)|\leq N\hat u(|Du|_{\bL_2}
+\rho_0^{-1}|u|_{\bL_2})|v|_{V}.  
$$
Hence we can see, noticing that 
\begin{equation}                                      
                                                                      \label{esssup 1}
\int_0^T\lambda^2_t|u_t|^2_{\bL_2}\,dt
\leq\esssup_{t\in[0,T]}|u_t|^2_{\bL_2}\int_0^T\lambda^2_t\,dt<\infty, 
\end{equation}
that there is a predictable $V^{\ast}$-valued process $v^{\ast}$, 
identified with the $V$-valued process $Qv^{\ast}$  such that 
\begin{equation}
                                                                      \label{1.15.2}
\int_0^T|v^{\ast}_t|_{V^{\ast}}^2\,dt
=\int_0^T|Qv^{\ast}_t|_{V}^2\,dt<\infty\quad (a.s.) 
\end{equation}
and \eqref{v*} holds. 
Using the definition of admissible functions  
and Assumption \eqref{assumption 1}\,(i)
we have 
$$
|-u_t^{Bj}D_ju_t+f_t(u_t,Du)|_{\bL_2}
\leq \bar u_t|Du_t|_{L_2}+\kappa_t|u_t|_{L_2}
+\chi_t|Du_t|_{\bL_2}+|\frG_t|_{L_2},  
$$
where 
\begin{equation}                                             \label{esssup 2}
\int_0^T\kappa_t|u_t|_{\bL_2}\,dt
\leq\esssup_{t\in[0,T]}|u_t|_{\bL_2}\int_0^T\kappa_t\,dt<\infty
\,\, (a.s.), 
\end{equation}
and by the Cauchy-Bunyakovsky-Schwarz inequality  
$$
\int_0^T(\bar u_t|Du_t|_{\bL_2}
+\chi_t|Du_t|_{\bL_2})\,dt<\infty.
$$
Using \eqref{q}, Assumptions \ref{assumption 1} and 
\ref{assumption 2} (i)
$$
|\gamma_t^{ki}D_iq_t^k|_{\bL_2}
=|\gamma_t^{ki}(\cR M^k_t(u_t))^i|_{\bL_2}
\leq
\vartheta_t\Big(\sum_{k=1}^{\infty}|\cR M_t^k(u_t))|^2_{\bL_2}\Big)^{1/2}
$$
$$
\leq \vartheta_t\Big(\sum_{k=1}^{\infty}|M_t^k(u_t)|^2_{\bL_2}\Big)^{1/2}
\leq 
N(\vartheta_t|Du_t|_{\bL_2}+\vartheta_t\lambda_t|u_t|_{\bL_2}+\vartheta_t|\frF_t|_{L_2})
$$
with a constant $N=N(K,d)$. Notice that 
\begin{equation}
                                                                            \label{esssup 3}
\int_0^T\vartheta_t\lambda_t|u_t|_{\bL_2}\,dt
\leq 
\esssup_{t\in[0,T]}|u_t|_{\bL_2}\int_0^T(\vartheta^2_t
+\lambda^2_t)\,dt<\infty 
\,\,(a.s.).
\end{equation}
Hence 
$F_t:=\cS\big(-u_t^{Bj}D_ju_t+f_t(u_t,Du)+\gamma_t^{ki}D_iq_t^k\big)$, 
$t\in[0,T]$,  
is an $H$-valued predictable process such that \eqref{F} 
holds and 
\begin{equation}
                                                                                     \label{2.15.2}
\int_0^T|F_t|_{H}\,dt
\leq 
\int_0^T|-u_t^{Bj}(D_ju_t+f_t(u_t,Du)
+\gamma_t^{ki}D_iq_t^k|_{\bL_2}\,dt
<\infty\,\,(a.s.).
\end{equation}
Notice that by Assumptions \ref{assumption 1} (i), 
\ref{assumption 2} (i) and Remark \ref{remark sigma} 
we have 
$$
\sum_{k}|\sigma^{ik}D_iu_t+h^{k}_t(u_t)|^2_{\bL_2}
\leq N(|Du_t|^2_{\bL_2}+\lambda^2_t|u_t|^2_{\bL_2}) 
$$
with $N=N(\delta,d)$, and 
by the definition of a solution and by \eqref{1.15.2} 
$$
\int_0^T|Du_t|^2_{\bL_2}+\lambda^2_t|u_t|^2_{\bL_2}
\leq \int_0^T|Du_t|^2_{\bL_2}\,dt
+
\esssup_{t\in[0,T]}|u_t|^2_{\bL_2}\int_0^T\lambda^2_t\,dt<\infty 
\,(a.s.). 
$$
Hence, because the operator norm of $\cS$ is 1,  
$$
G_t=(G_t^k)_{k=1}^{\infty}
:=(\cS(\sigma^{ik}D_iu_t+h^{k}_t(u_t)))_{k=1}^{\infty},
\quad t\in[0,T], 
$$
is an $\ell_2(H)$-valued predictable process 
such that 
\begin{equation}
                                                                             \label{3.15.2}
\int_0^T|G_t|^2_{\ell_2(H)}\,dt<\infty \,\,(a.s.),  
\end{equation}
and \eqref{G} holds. For $v\in \cV$ the convolution 
$v^{(\varepsilon)}=v\ast k_{\varepsilon}$ 
belongs to $\cV\cap L_d$ for $\varepsilon>0$, 
and $\lim_{\varepsilon\to0}|v-v^{\varepsilon}|_{\cV}=0$,  
when 
$k_{\varepsilon}=\varepsilon^{-d}k(\cdot/\varepsilon)$. 
and $k\in C_0^{\infty}(\bR,\bR^d)$ has compact support 
and unit integral. 
This shows that $\cV\cap L_d$ is dense in $\cV$. 
Hence, due to  
\eqref{v*}, \eqref{F}, \eqref{G}, \eqref{1.15.2}, \eqref{2.15.2} 
and the definition of the solution, we have 
\eqref{8.9.5} and \eqref{4.9.7}. Thus we can apply Theorem 
\ref{theorem ito} to get that a $P\otimes dt$-modification 
of $u$, denoted also by $u$, is an $H$-valued 
continuous process and 
almost surely 
\begin{align}
| u_{t}  | ^{2}_{H }= | u_{0}  | ^{2}_{H }
-2&\int_{0}^{t}\big[
\big(a^{ij}_{s}D_iu_s+\frf_s(u_s),D_ju_s\big)
+\big(u_{s(u_s)},u_s\big)\big]\,ds                                                      \nonumber\\
+2&
\int_{0}^{t}
\big(f_s(u_s,Du_s)+\gamma_s^{kj}D_ju_s,u_s\big)\,ds               \nonumber\\                        
+&\int_{0}^{t}
\sum_{k}\big|\cS\big(\sigma_s^{ik}D_iu_s+h_s(u_s)\big)\big|^2_{H}\,ds
+m_t                                                                                      \label{4.15.2}
\end{align}
holds for all $t\in[0,T]$, 
where 
$$                                                                                              
m_t=2\int_0^t
\big(\sigma^{ik}D_iu_s+h_s^k(u_s),u_s\big)\,dw^k_s. 
$$

To show the estimate \eqref{u estimate} 
we may assume that the expression 
on right-hand side of \eqref{u estimate} is finite. First note that 
by Corollary \ref{corollary 0} we have 
\begin{equation}
                                                                         \label{b0}
(u_{t(u_t)},u_t)=0 
\quad
\text{($P\otimes dt$-a.e)}.
\end{equation}
Then use that Assumptions \ref{assumption 1} and \ref{assumption 2}\,(i) to get  
$$
I_s:=-2\big(a^{ij}_{s}D_iu_s-2\frf_s(u_s),D_ju_s\big)
-2\big(f_s(u_s,Du_s)+2\gamma_s^{kj}D_ju_s,u_s\big) 
$$ 
$$         
\leq-2\big(a^{ij}_{s}D_iu_s,D_ju_s\big)
+2\lambda_s|u_s|_{\bL_2}|Du_s|_{L_2}+2|\frF_s|_{L_2}|Du_s|_{L_2}
$$
$$
+2\kappa_s|u_s|^2_{\bL_2}+2\chi_s|Du_s|_{L_2}|u_s|_{\bL_2}
$$
$$
+2|\frG_s|_{L_2}|u_s|_{\bL_2}
+2\vartheta_s|Du_s|_{L_2}|u_s|_{\bL_2}
$$
$$
\leq -2\big(a^{ij}_{s}D_iu_s,D_ju_s\big)+\tfrac{\delta}{2}|Du_s|^2_{L_2}
+N(\lambda^2_s+\chi^2_s+\vartheta_s^2+\kappa_s)|u_s|^2_{\bL_2}
$$
\begin{equation}
                                                                                 \label{I}
+|\frF_s|^2_{L_2}+2|\frG_s|_{L_2}|u_s|_{\bL_2}, 
\end{equation}
and 
$$
J_s:=\sum_{k}|\cS\big(\sigma_s^{ik}D_iu_s+h^k_s(u_s)\big)\big|^2_{H}
\leq \sum_{k}\big|\sigma_s^{ik}D_iu_s+h^k_s(u_s)\big|^2_{\bL_2}
$$
$$
\leq \big(\sigma_s^{ik}\sigma_s^{ik}D_iu_s,D_ju_s\big)
+2\big|\big(\sigma_s^{ik}D_iu_s,h^k_s(u_s)\big)\big|
+\sum_{k}\big|h^k_s(u_s)\big|^2_{\bL_2}
$$
\begin{equation}
                                                                                  \label{J}
\leq \big(\sigma_s^{ik}\sigma_s^{ik}D_iu_s,D_ju_s\big)
+\tfrac{\delta}{2}|Du_s|^2_{L_2}+N\lambda_s^2|u_s|^2_{\bL_2}
+N
|\frF_s|^2_{L_2}
\end{equation}
with a constant $N=N(d,\delta)$. 
Note that by Assumption \ref{assumption 1} 
we have 
$$
\big((2a^{ij}_s-\sigma^{ik}\sigma^{jk})D_iu_s,D_ju_s\big)
\geq 2\delta|u_s|^2_{\bL_2}. 
$$
Thus, by \eqref{I} and \eqref{J}  we get 
$$
I_s+J_s\leq -\delta|Du_s|^2_{L_2}
+N(\lambda_s^2+\chi^2_s+\vartheta_s^2+\kappa_s)|u_s|^2_{\bL_2}
$$
$$
+N|\frF_s|^2_{L_2}+2|\frG_s|_{L_2}|u_s|_{\bL_2}, 
$$
Using this and \eqref{b0}, from \eqref{4.15.2} we obtain 
$$
d|u_t|^2_{\bL_2}\leq \big(-\delta|Du_t|^2_{L_2}
+N(\lambda_t^2+\chi^2_t+\vartheta_t^2+\kappa_t)|u_t|^2_{\bL_2}\big)\,dt
$$
\begin{equation}
                                                                \label{1.16.2}
(N|\frF_t|^2_{L_2}+2|\frG_t|_{L_2}|u_t|_{\bL_2})\,dt+dm_t
\end{equation}
with a constant $N=N(\delta)$.
Hence 
$$
d|e^{-\varphi_t}u_t|^2_{\bL_2}\leq \big(-\delta |e^{-\varphi_t}Du_t|^2_{L_2}
-\alpha_t|e^{-\varphi_t}u_t|^2_{\bL_2}\big)\,dt
$$
$$
+\big(
N|e^{-\varphi_t}\frF_t|^2_{L_2}+2e^{-2\varphi}|u_t|_{\bL_2}|\frG_t|_{L_2}
\big)\,dt+e^{-2\varphi_t}\,dm_t.  
$$
From this, by standard stopping time argument and the using H\"older's and Young's 
inequalities  we get 
$$
E|e^{-\varphi_{T\wedge\tau}}u_{T\wedge\tau}|^2_{\bL_2}
+\delta E\int_0^{T\wedge\tau}|e^{-\varphi_s}Du_s|_{L^2}^2\,ds
+E\int_0^{T\wedge\tau}\alpha_s|e^{-\varphi_s}u_s|^2_{\bL_2}\,ds
$$
\begin{equation}
                                                                                         \label{2.16.2}                                                                                       
\leq E |u_0|^2_{\bL_2}
+NE\int_0^{T\wedge\tau}|e^{-\varphi_s}\frF_s|^2_{L_2}\,ds
+2E\int_0^{T\wedge\tau}e^{-2\varphi_s}|u_s|_{\bL_2}|\frG_s|_{L_2}\,ds
\end{equation}                                                                                 
and 
$$
E\sup_{t\leq T\wedge\tau}|e^{-\varphi_{t}}u_{t}|^2_{\bL_2}
\leq E|u_0|^2_{\bL_2}
+E\int_0^{T\wedge\tau}|e^{-\varphi_s}\frF_s|^2_{L_2}\,ds
$$
$$
+2E\int_0^{T\wedge\tau}e^{-2\varphi_s}|u_s|_{\bL_2}|\frG_s|_{L_2}\,ds
+E\sup_{t\leq T\wedge\tau}\int_0^{t}e^{-2\varphi_s}\,dm_s
$$
$$
\leq E|u_0|^2_{\bL_2}
+E\int_0^{T\wedge\tau}|e^{-\varphi_s}\frF_s|^2_{L_2}\,ds
+\tfrac{1}{4}E\sup_{t\leq T\wedge\tau}|e^{-\varphi_t}u_t|^2_{\bL_2}
$$
\begin{equation}
                                                                                           \label{3.16.2}
+4\Big(E\int_0^{T\wedge\tau}e^{-\varphi_s}|\frG_s|_{L_2}\,ds\Big)^2
+E\sup_{t\leq T\wedge\tau}\int_0^{t}e^{-2\varphi_s}\,dm_s
 \end{equation}                                                                                         
for any stopping time $\tau\leq T$. 
By the Davis inequality, 
H\"older's and Young's inequalities we have  
$$
E\sup_{t\leq T\wedge\tau}\int_0^{t}e^{-2\varphi_s}\,dm_s
\leq 6E
\Big(\int_0^{T\wedge\tau}e^{-4\varphi_s}
J_s |u_s|^2_{\bL_2}\,ds
\Big)^{1/2}
$$
\begin{equation}
                                                       \label{4.16.2}
\leq 
\tfrac{1}{8}
E\sup_{t\leq T\wedge\tau}|e^{-\varphi_s}u_s|^2_{\bL_2}+
NE\int_0^{T\wedge\tau}e^{-2\varphi}J_s\,ds.  
\end{equation}
By \eqref{J} and using \eqref{2.16.2},  for the last term here we get 
$$
NE\int_0^{T\wedge\tau}e^{-2\varphi}J_s\,ds
$$
$$
\leq N_1\int_0^{T\wedge\tau}\big(|e^{-\varphi_s}Du_s|^2_{L_2}+
\lambda_s^2|e^{-\varphi_s}u_s|^2_{\bL_2}
+|e^{-\varphi_s}\frF_s|^2_{\bL_2}\big)\,ds
$$
$$
\leq N_1E |u_0|^2_{\bL_2}
+N_2E\int_0^{t\wedge\tau}|e^{-\varphi_s}\frF_s|^2_{L_2}\,ds
+N_2E\int_0^{t\wedge\tau}e^{-2\varphi_s}|u_s|_{\bL_2}|\frG_s|_{L_2}\,ds
$$
$$
\leq N_1E |u_0|^2_{\bL_2}
+N_2E\int_0^{t\wedge\tau}|e^{-\varphi_s}\frF_s|^2_{L_2}\,ds
$$
\begin{equation*}                                                                        
+\tfrac{1}{16}
E\sup_{t\leq T\wedge\tau}|e^{-\varphi_s}u_s|^2_{\bL_2}
+N_3E\Big(\int_0^{t\wedge\tau}e^{-\varphi_s}|\frG_s|_{L_2}\,ds\Big)^{2}
\end{equation*}
with constants $N_1$, $N_2$ and $N_3$ depending only 
on $d$ and $\delta$. 
Consequently, combining this with \eqref{4.16.2}, 
from \eqref{3.16.2} we get 
$$
E\sup_{t\leq T\wedge\tau}|e^{-\varphi_{t}}u_{t}|^2_{\bL_2}
\leq NE|u_0|^2_{\bL_2}
+NE\int_0^{T\wedge\tau}|e^{-\varphi_s}\frF_s|^2_{L_2}\,ds
$$
$$
+NE\Big(\int_0^{T\wedge\tau}e^{-\varphi_s}|\frG_s|_{L_2}\,ds\Big)^{2}
+\tfrac{1}{2}E\sup_{t\leq T\wedge\tau}|e^{-\varphi_t}u_t|^2_{\bL_2}
$$                                                                                    
for every stopping time $\tau\leq T$. Taking here in place of $\tau$ the 
stopping times 
$$
\tau_{n}=\inf\{t\in[0,T]: |u_t|_{\bL_2}\geq n\}\wedge\tau
$$
for integers $n\geq1$, we get 
$$
E\sup_{t\leq T\wedge\tau_n}|e^{-\varphi_{t}}u_{t}|^2_{\bL_2}
\leq 2NE|u_0|^2_{\bL_2}
+2NE\int_0^{T\wedge\tau}|e^{-\varphi_s}\frF_s|^2_{L_2}\,ds
$$
$$
+2NE\Big(\int_0^{T\wedge\tau}e^{-\varphi_s}|\frG_s|_{L_2}\,ds\Big)^{2}.  
$$
Letting here $n\to\infty$ by Fatou's lemma  we obtain 
$$
E\sup_{t\leq T\wedge\tau}|e^{-\varphi_{t}}u_{t}|^2_{\bL_2}
\leq 2NE|u_0|^2_{\bL_2}
+2NE\int_0^{T\wedge\tau}|e^{-\varphi_s}\frF_s|^2_{L_2}\,ds
$$
\begin{equation}
                                                                                          \label{2.17.2}
+2NE\Big(\int_0^{T\wedge\tau}e^{-\varphi_s}|\frG_s|_{L_2}\,ds\Big)^{2}.  
\end{equation}
We use this to estimate the last term in \eqref{2.16.2} as
$$ 
2E\int_0^{T\wedge\tau}e^{-2\varphi_s}|u_s|_{\bL_2}|\frG_s|_{L_2}\,ds
$$
$$
\leq E\sup_{t\leq T\wedge\tau}|e^{-\varphi_{t}}u_{t}|^2_{\bL_2}
+E\Big(\int_0^{T\wedge\tau}|e^{-\varphi_s}\frG_s|_{L_2}\,ds\Big)^2
$$
$$
\leq NE|u_0|^2_{\bL_2}
+NE\int_0^{T\wedge\tau_n}|e^{-\varphi_s}\frF_s|^2_{L_2}\,ds
+NE\Big(\int_0^{T\wedge\tau_n}e^{-\varphi_s}|\frG_s|_{L_2}\,ds\Big)^{2}.    
$$
Thus from \eqref{2.16.2} we get 
$$
E|e^{-\varphi_{T\wedge\tau}}u_{t\wedge\tau}|^2_{\bL_2}
+E\int_0^{T\wedge\tau}|e^{-\varphi_s}Du_s|_{\bL^2}^2\,ds
+E\int_0^{T\wedge\tau}\alpha_s|e^{-\varphi_s}u_s|^2_{\bL_2}\,ds
$$
\begin{equation*}                                                                                                                                                                       
\leq NE |u_0|^2_{\bL_2}
+NE\int_0^{T\wedge\tau}|e^{-\varphi_s}\frF_s|^2_{L_2}\,ds
+2E\Big(\int_0^{T\wedge\tau}|e^{-\varphi_s}\frG_s|_{L_2}\,ds\Big)^2
\end{equation*}                                                                      
with a constant $N=N(d,\delta)$. Combining this with 
\eqref{2.17.2} we obtain the estimate \eqref{u estimate} with any 
stopping time $\tau\leq T$ in place of $T$.

\end{proof}

\begin{proof}[Proof of Theorem \ref{theorem 2}] 
For $\fru=u-v$ we have that for each $\varphi\in\cV\cap\bL^d$
$$
(\fru_t,\varphi)
=-\int_0^t\big[(a^{ij}_sD_j\fru_s+\frf_s^{i}(u_s)-\frf^{i}_s(v_s),D_i\varphi)
+(u_{s(u_s)}-v_{s(v_s)}, \varphi)\big]\,ds
$$
$$
+\int_0^t
(f_s(u_s,Du_s)-f_s(v_s,Dv_s),\varphi)\,ds
$$
$$
+\int_0^t
\Big(\gamma_s^{ki}\big(\cR\big(M^k_s(u_s)-M^k_s(v_s)\big)\big)^i,\varphi\Big)
\,ds
$$
\begin{equation}
                                                                               \label{3.17.2}
+\int_0^t(\sigma_s^{ik}D_i\fru_s+h_s^k(u_s)-h_s^k(v_s),\varphi)\,dw^k_s
\end{equation}
holds for $P\otimes dt$-a.e. $(\omega,t)\in[0,T]$, where $M_s^k(\cdot)$ 
is defined in \eqref{M}.  As in the proof of 
Theorem \ref{theorem 1}, we set $V:=\cV$, $H:=\cH$, use the notation 
$V^{\ast}$ and $H^{\ast}$ for their conjugate spaces, 
$\langle\cdot,\cdot\rangle$ for the duality product between $V$ and $V^{\ast}$,  
and will use Theorem \ref{theorem ito}. 
In the same fashion as a similar statement in the proof of Theorem \ref{theorem ito} 
is established, we can show 
the existence of a $V^{\ast}$-valued predictable process $v^{\ast}$ 
such that 
$$
\int_0^T|v^{\ast}|^2_{V^{\ast}}\,ds<\infty \,\,(a.s.), 
$$
 and for all $\varphi\in V$
$$
\langle \varphi,v^{\ast}_s\rangle
=-(a^{ij}D_j\fru_s+\frf_s^{i}(u_s)-\frf^{i}_s(v_s),D_i\varphi)-
(u_s^{Mi}D_iu_s-v_s^{Mi}D_iv_s,\varphi)
$$
holds for all $(\omega,t)$. Moreover, 
we can verify that 
$$
F_s:=\cS
\big(v_{s}^{Bi}D_iv_{s}-u_{s}^{Bi}D_iu_{s}
+f_s(u_s,Du_s)-f_s(v_s,Dv_s)\big)
$$
$$
+\cS\Big(\gamma_s^{ki}\big(\cR\big(M^k_s(u_s)-M^k_s(v_s))\big)^i\Big), 
\quad t\in[0,T] 
$$
and $G_t=(G^k_t)_{k=1}^{\infty}$, defined by
$$
G^k_t:=\cS\Big(\sigma^{ik}D_i\fru_s+h^{k}_s(u_s)-h^{k}_s(v_s)\Big)
$$
are $H$-valued and $\ell_2(H)$-valued predictable processes such that 
$$
\int_0^T\big[|G_t|^2_{\ell_2(H)}+|F_s|_{H}\big]\,ds<\infty\,\,(a.s.).
$$
Hence, because $\cV\cap \bL_d$ is dense in $V$, 
by virtue of \eqref{3.17.2} we get \eqref{4.9.7} (with $\fru_t$ in place of 
$v_t$) for all $v\in V$. Thus we can use Theorem \ref{theorem ito} 
to get that $\fru=(\fru_t)_{t\in[0,T]}$ has an $H$-valued  continuous 
$P\otimes dt$-modification,  
denoted also by $\fru$, such that almost surely 
\begin{align}
|\fru_t|^2_{\bL_2}=&-2\int_0^t\Big(a^{ij}D_j\fru_s
+\frf^i_s(u_s)-\frf^i_s(v_s),D_j\fru_s\Big)\,ds                 \nonumber\\
&-2\int_0^t\frb(u_s,u_s,\fru_s)-\frb(v_s,v_s,\fru_s)\,ds     \nonumber\\
&+2\int_0^t\big(f(u_s,Du_s)-f(v_s,Dv_s),\fru_s\big)\,ds  \nonumber\\
&+2\int_0^t
   \big(
   \gamma_s^{ki}\big(\cR(M^k_s(u_s)-M^k_s(v_s))\big)^i,\fru_s
   \big)\,ds                                                                     \nonumber\\
&+\int_0^t
    \sum_{k}\Big|\cS\big(\sigma_s^{ik}D_i\fru_s+h^k_s(u_s)-h^k_s(v_s)\big)
    \Big|^2_{H}\,ds+m_t                                                 \label{1.19.2}
\end{align}
for all $t\in[0,T]$, where 
$$
m_t=2\int_0^t
\big(\sigma_s^{ik}D_i\fru_s+h_s^{k}(u_s)-h^k_s(v_s), \fru_s \big)\,dw^k_s. 
$$
By Assumption \ref{assumption 2}\,(ii), using Young's inequality 
\begin{equation*}
-2\big(a^{ij}_sD_j\fru_s
+\frf^i_s(u_s)-\frf^i_s(v_s),D_j\fru_s\big)
\end{equation*}
\begin{equation}
                                                                            \label{2.19.2}
\leq 
-2\big(a^{ij}_sD_j\fru_s,D_i\fru_s\big)+\tfrac{\delta}{2}|D\fru_s|^2_{L_2}
+N_1\lambda^2_s|\fru_s|^2_{\bL_2}
\end{equation}
with a constant $N_1=N_1(\delta)$.    
To estimate  the absolute value of 
$$
I:=\frb(v_s,v_s,\fru_s)-\frb(u_s, u_s,\fru_s)
$$
note first that without loss of generality we may assume that $\hat u\leq \hat v$. 
Using the linearity in the first two 
arguments of the trilinear functional $\frb$, we have the identity
$$
 I=I_1+I_2+I_3, 
 $$
with $I_1=-\frb(u_s, \fru_s,\fru_s)$, $I_2=\frb(\fru_s,\fru_s,\fru_s)$, 
$I_3:=-\frb(\fru_s, u_s,\fru_s)$. 
Using Corollaries \ref{corollary 0} and \ref{corollary v} we have 
$I_1=I_2=0$ and 
$$
2|I|=2|I_3|\leq 
 N\hat u(|D\fru_s|_{L_2}
+\rho_0^{-1}|\fru|_{\bL_2})|D\fru_s|_{L_2}
 +N\bar u_s|D\fru_s|_{L_2}|\fru_s|_{\bL_2}                                                         
$$
\begin{equation}
                                                            \label{4.17.2}
 \leq N\hat u|D\fru_s|^2_{L_2}
 +\tfrac{\delta}{4}|D\fru_s|^2_{L_2}
 +N_1(\hat u^2+\bar u_s^2)|\fru_s|^2_{\bL_2}
\end{equation}                                                           
with constants $N=N(d,r)$ and $N_1=N_1(d,r,\delta,\rho_0)$. 
By \eqref{f}, using the Cauchy-Schwarz-Bunyakovsky 
and the Young inequalities we have 
$$
2\big(f(u_s,Du_s)-f(v_s,Dv_s),\fru_s\big)
\leq 2\big(
\kappa_s|\fru_s|_{\bL_2}+\chi_s|D\fru_s|_{L_2}
\big)|\fru_s|_{\bL_2}
$$
\begin{equation}
                                                                      \label{3.19.2}
\leq \tfrac{\delta}{8}|D\fru_s|^2_{L_2}
+N_2(\kappa_s+\chi^2_s)|\fru_s|^2_{\bL_2}
\end{equation}
with a constant $N_2=N_{2}(\delta)$. 
Moreover, 
$$
K_s:=\Big(
   \gamma_s^{ki}\big(\cR(M^k_s(u_s)-M^k_s(v_s))\big)^i,\fru_s
\Big)
$$
$$
\leq   \vartheta_s\Big|\big|\cR\big((M_s(u_s)-M_t(v_s))\big|_{\ell_2(\bR^d)}
|\fru_s|\Big|_{L_1}
$$
$$
 \vartheta_s\Big|\big|\cR\big((M_s(u_s))-M_s(v_s))\big|_{\ell_2(\bR^d)}\Big|_{L_2}
|\fru_s|_{\bL_2}. 
$$
Since the operator norm of $\cR$ is 1,  we get 
$$
\Big|\big|\cR\big((M_s(u_s))-M_s(v_s))\big|_{\ell_2(\bR^d)}\Big|^2_{L_2}
= 
\sum_{k=1}^{\infty}\big|\cR\big(M_s^k(u_s)-M_s^k(v_s)\big)\big|_{\bL_2}^2
$$
$$
\leq \sum_{k=1}^{\infty}\big|M_s^k(u_s)-M_s^k(v_s)\big|_{\bL_2}^2
=||M_s(u_s)-M_s(v_s)|_{\ell_2(\bR^d)}\big|_{L_2}^2
$$
$$
\leq 
\Big|\big||\sigma_s^iD_i\fru_s|_{\bR^d}\big|_{\ell_2}
+|h_s(u_s)-h_s(v_s)|_{\ell_2(\bR^d)}
\Big|^2_{L_2}. 
$$
Hence by Remark \ref{remark sigma} and condition \eqref{dh}, 
we obtain 
\begin{align}
K_s&\leq \vartheta_s N\big||D\fru_s|
+\lambda_s|\fru_s|\big|_{L_2}
|\fru_s|_{\bL_2}                                                    \nonumber\\                           
&\leq \tfrac{\delta}{16}|D\fru_s|^2_{L_2}
+N'(\vartheta^2_s+\lambda^2_s)|\fru_s|^2_{\bL_2}  \label{4.19.2}
\end{align}
with constants $N$ and $N'$ depending only on $d$ and $\delta$. 
Since the operator norm of $\cS$ is 1, by 
Assumption \ref{assumption 1} and condition \eqref{dh}, using 
the Cauchy-Schwarz-Bunyakovsky and the Young inequalities 
we have 
$$
\sum_{k} \Big|\cS\big(\sigma_s^{ik}D_i\fru_s+h^k_s(u_s)-h^k_s(v_s)\big)
    \Big|^2_{H}
$$
$$
\leq\sum_{k}\Big|\sigma_s^{ik}D_i\fru_s+h^k_s(u_s)-h^k_s(v_s)
    \Big|^2_{H}
$$
\begin{equation}
                                                                            \label{5.19.2}
\leq(\sigma_s^{ik}\sigma_s^{jk}D_i\fru_s,D_j\fru_s)+
\tfrac{\delta}{32}|D\fru_s|^2_{\bL_2}+N\lambda_s^2|\fru_s|^2_{\bL_2}
\end{equation}
with a constant $N=N(d,\delta)$. Consequently, by virtue of 
the estimates 
\eqref{2.19.2} through \eqref{5.19.2} and taking into account 
that 
$$
-\big((2a^{ij}-\sigma^{ik}\sigma^{jk})D_j\fru_s,D_i\fru_s\big)
\leq -2\delta|D\fru_s|^2_{\bL_2}, 
$$
from \eqref{1.19.2} 
we get 
$$
d|\fru_t|^2_{\bL_2}
\leq -(\delta-N\hat u)|D\fru_t|^2\,dt+\zeta_t|\fru_t|_{\bL_2}\,dt+dm_t, 
$$
where $N=N(d,r)$ is a constant, and 
$$
\zeta_t=N'(\hat u+\bar u_t^2+\lambda^2_t+\chi^2+\vartheta^2_t+\kappa_t)
$$
with a constant $N'=N'(d,r,\delta,\rho_0)$. Set 
$$
\phi_t:=\int_0^t\zeta_s\,ds, \quad t\in[0,T].
$$
Then for $\hat u\leq\delta/N$ we have 
$$
e^{-\phi_t}|\fru_t|^2_{\bL_2}\leq \int_0^te^{-\phi_s}dm_s=:m'_t,   
\quad t\in[0,T]. 
$$
Here $m'=(m'_t)_{t\in[0,T]}$ is a nonnegative local martingale starting from 0.  
Then almost surely $m'_t=0$ for all $t\in[0,T]$, which finishes the proof 
of the theorem. 
\end{proof}

\mysection{On $W^1_2$-regularity of admissible solutions}

Let $u$ be an $\cH$-solution to \eqref{eq}-\eqref{ini} in $[0,T]$, and consider the 
system of equations

\begin{align} 
                                                                      \label{eq2}
dv_t=&\big(D_i(a^{ij}_{t}D_{j}v_t+\frf_t^i(u_t))
+f_t(u_t,Du_t)
-v_{t(u_t)}-\nabla p_t+\gamma^{ki}_tD_{i}q_t^k\big)\,dt                                 \nonumber\\
&+(\sigma^{ik}_{t}D_{i}v_t+h_t^{k}(u_t)
-\nabla q^k_t)\,dw^k_t, 
\quad 
\text{div}\, v_t=0 
\end{align} 
on $\Omega\times[0,T]\times\bR^d$, with initial condition 
\begin{equation}                                       
                                                                 \label{ini2}
v_0(x)=u_0(x),\quad x\in\bR^d, \quad 
\end{equation}
for
$v=(v_t^{1}(t,x),...,v^{d}(t,x))$, 
where 
\begin{equation}
                                                                   \label{sol2b}
 \nabla q^k_t=\cR(\sigma_t^{jk}D_jv_t+h^k_t(u_t))
 \quad 
 \text{$P\otimes dt\otimes dx$-a.e. \,\, for every $k\geq1$}.                                                                                     
\end{equation}

Like the $\cH$ solution $u$ to \eqref{eq}-\eqref{ini} is defined, we say that 
$v$ is an $\cH$-solution to  \eqref{eq2}-\eqref{ini2} if it is an $\cH$-valued 
continuous $\cF_t$-adapted process, 
$$
\int_0^T|v_t|^2_{\cV}<\infty\,\,(a.s.),
$$
and almost surely 
\begin{align}
(v_t,\psi)=&(u_0,\psi)
 -\int_0^t\big[(a_s^{ij}D_jv_s+\frf^i_s(u_s),D_i\psi)
 +(v_{s(u_s)},\psi)\big]\,ds                                                      \nonumber\\
&+\int_0^t \big[(\gamma^{ki}_s\cR(\sigma_s^{jk}D_jv_s+h^k_s(u_s))^{i}
+f_s(u_s,Du_s), \psi)\big]\,ds      \nonumber\\
 &+\int_0^t(\sigma_s^{jk}D_jv_s+h^k_s(u_s),\psi)\,dw^k_s,                    \label{1.25.2}
 \end{align}
 for all $t\in[0,T]$ and $\psi\in \cV$.

Clearly,  $u$ is an $\cH$-solution to  \eqref{eq2}-\eqref{ini2}. 
We are going to use this to raise the regularity of $u$. 
To this end we make the following additional 
assumption. 
\begin{assumption}
                                                                             \label{assumption 4}
There exist constants $K_i$ (i=1,2,3), 
a nonnegative predictable process 
$\theta=\theta_t$ and a nonnegative 
$\cP\otimes\cB(\bR^d)$-measurable 
function $\frH=\frH_t(x)$ 
on $\Omega\times[0,T]\times\bR^d$, such that 
$$
\int_0^T\theta^2_t\,dt<\infty,
\quad
\int_0^T|\frH_t|^2_{L_2}\,dt<\infty \,\,\,(a.s.), 
$$
and  for all $(\omega,t)$ we have 
 
\noindent                                    
 (i)  
 $a_t^{ij}$  and $\sigma^i$ (as an $\ell_2$-valued 
function) are continuously differentiable in $x\in\bR^d$ 
for each $i,j=1,2,...,d$ such that 
$$
\sum_{ij}|Da_t^{ij}|_{\bR^d}^2+\sum_{i}|D\sigma^i|^2_{\ell_2(\bR^{d})}
\leq \theta^2_t,
\quad
\text{and}\quad\sum_k\sum_i|\gamma_t^{ki}|_{\bR^d}^2\leq K_1,
$$
for all $x\in \bR^d$;

\noindent                 
(ii) the $\ell_{2}(\bR^d)$-valued function 
 $h_t=h_t(x,u)$ is continuously differentiable in $(x,u)\in\bR^d\times\bR^d$, 
 and the partial derivatives $\partial_x h_t$ and $\partial_u h_t$
take values in $\frl_2:=\ell_{2}(\bR^{d\times d})$ 
such that for all $(x,u)$; 
$$
|\partial_x h_t(x,u)|_{\frl_2}\leq \theta_tu+\frH_t(x), 
\quad
|\partial_u h_t(x,u)|_{\frl_2}\leq K_2;
$$

\noindent
(iii) 
for all $(x,u,z)\in\bR^d\times\bR^d\times\bR^{d\times d}$ 
$$
|f_t(x,u,z)|_{\bR^d}\leq \theta_t |u|_{\bR^d}+K_3|z|_{\bR^{d\times d}}+\frH_t(x).  
\quad
$$                                   
\end{assumption}

\begin{theorem}            
                                                              \label{theorem 3}
Let Assumptions \ref{assumption 1}, \ref{assumption 2},   
\ref{assumption 3} 
and \ref{assumption 4} hold with $\frf=0$. 
Let $u$ be an admissible solution 
to \eqref{eq}-\eqref{ini} such that $u_0\in \cV$ (a.s.). Then 
there is a constant $N=N(r,d)>0$ 
such that if  $\hat u\leq \delta/N$ then $u=(u_t)_{t\in[0,T]}$ 
is a continuous $\cV$-valued $\cF_t$-adapted process such that 
$u\in L_2([0,T], \bW^2_2)$ (a.s.). 
\end{theorem}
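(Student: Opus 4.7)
The plan is to exploit the fact that the system \eqref{eq2}--\eqref{ini2}, which is \emph{linear} in $v$ once $u$ is fixed, admits $u$ itself as an $\cH$-solution. Therefore it suffices to construct by other means a $\cV$-regular $\cH$-solution $v$ of \eqref{eq2}--\eqref{ini2} with $v\in C([0,T],\cV)\cap L_2([0,T],\bW^2_2)$ (a.s.), and then appeal to uniqueness of $\cH$-solutions of \eqref{eq2}--\eqref{ini2} to conclude $v=u$. Uniqueness of $\cH$-solutions of the linear problem is essentially a re-run of the argument of Theorem \ref{theorem 2}: for the difference $w=v_1-v_2$ of two such solutions one obtains $dw_t$ with drift involving $-w_{(u)}$ and no forcing, so applying Theorem \ref{theorem ito} and using Corollaries \ref{corollary 0} and \ref{corollary v} together with $\hat u\leq \delta/N$ yields $w\equiv 0$ exactly as in \eqref{1.19.2}--\eqref{4.17.2}.

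The construction of $v$ proceeds by a Galerkin approximation $v^{(n)}$ in $\cV$ (projecting onto finite-dimensional divergence-free subspaces), for which the standard finite-dimensional theory provides smooth-in-$x$ approximations. The crucial step is a uniform $\cV$-estimate. Starting from It\^o's formula applied to $|Dv^{(n)}|^2_{\bL_2}$, the highest-order contribution is
\[
-2(a^{ij}_s D_jD_mv^{(n)}, D_iD_mv^{(n)}) + (\sigma^{ik}_s\sigma^{jk}_s D_jD_mv^{(n)}, D_iD_mv^{(n)})\leq -2\delta|D^2v^{(n)}|^2_{\bL_2}
\]
by Assumption \ref{assumption 1}(i). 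The nonlinear drift contributes $-2(D_mv^{(n)},D_m(u^j_sD_jv^{(n)}))$; integrating by parts and using $\text{div}\,u=0$ this collapses to $2(v^{(n)}_{(u_s)},\Delta v^{(n)})$, as the $u^jD_jD_mv^{(n)}\cdot D_mv^{(n)}$ piece integrates to zero. The remaining terms coming from $Da$, $D\sigma$, $Dh$, $f$ and $\gamma^{ki}D_iq^k$ are estimated using Assumption \ref{assumption 4}, the fact that the operator norms of $\cS$ and $\cR$ are $1$, and Remark \ref{remark sigma}, producing contributions bounded by $(\delta/8)|D^2v^{(n)}|^2_{\bL_2}+\Lambda_s(|Dv^{(n)}|^2_{\bL_2}+|v^{(n)}|^2_{\bL_2})+ (\text{inhomogeneous terms in }\frH,\frF,\frG)$, with $\Lambda_s$ integrable in $s$.

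The decisive term is $|(v^{(n)}_{(u_s)},\Delta v^{(n)})|$. By Cauchy--Schwarz,
\[
|(v^{(n)}_{(u_s)},\Delta v^{(n)})|\leq\Bigl(\int_{\bR^d}|u_s|^2|Dv^{(n)}|^2\,dx\Bigr)^{1/2}|\Delta v^{(n)}|_{\bL_2},
\]
and applying Lemma \ref{lemma 12.17.1} (extended to $\varphi\in\bW^1_2$ as in the proof of Corollary \ref{corollary b}) with $\varphi=Dv^{(n)}$ and the splitting $u_s=u^M_s+u^B_s$ yields
\[
\int_{\bR^d}|u_s|^2|Dv^{(n)}|^2\,dx\leq N\hat u^2|D^2v^{(n)}|^2_{\bL_2}+(N\rho_0^{-2}\hat u^2+2\bar u^2_s)|Dv^{(n)}|^2_{\bL_2}
\]
with $N=N(d,r)$. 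Young's inequality then gives $|(v^{(n)}_{(u_s)},\Delta v^{(n)})|\leq N\hat u\,|D^2v^{(n)}|^2_{\bL_2}+C_s|Dv^{(n)}|^2_{\bL_2}$, and the smallness hypothesis $\hat u\leq \delta/N$ allows the $|D^2v^{(n)}|^2_{\bL_2}$ piece to be absorbed into the $-2\delta|D^2v^{(n)}|^2_{\bL_2}$ from the parabolic part. Applying Gronwall to the resulting differential inequality (with an exponential weight $e^{-\varphi_t}$ analogous to that of Theorem \ref{theorem 1} to kill the $\Lambda_s$ coefficient), and using $u_0\in\cV$ a.s., produces a uniform bound
\[
E\sup_{t\leq T}|v^{(n)}_t|^2_{\cV}+E\int_0^T|v^{(n)}|^2_{\bW^2_2}\,dt\leq C,
\]
after a localization via stopping times $\tau_k=\inf\{t:|u_0|^2_\cV+\int_0^t(\cdots)\geq k\}$ to handle the $P$-a.s.\ integrability from Assumptions \ref{assumption 2}--\ref{assumption 4}.

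Standard weak compactness then extracts a limit $v$ which, by lower semi-continuity, satisfies the stated regularity. Verifying that $v$ is an $\cH$-solution of \eqref{eq2}--\eqref{ini2} is routine given that the equation is linear in $v$ and the coefficients are fixed. Finally, $v$ and $u$ are both $\cH$-solutions of \eqref{eq2}--\eqref{ini2}, so uniqueness established above gives $u=v$ for all $t\in[0,T]$ a.s., which completes the proof. The main obstacle is exactly the absorption step: the interplay between the Morrey/admissible structure of $u$, the extension of Lemma \ref{lemma 12.17.1} to $Dv\in\bW^1_2$, and the quantitative smallness $\hat u\leq\delta/N$ are what make the $\bW^1_2$-energy estimate close, and a secondary technical point is controlling the pressure term $\gamma^{ki}D_iq^k$ at the $\bW^1_2$ level, where one differentiates the Leray projection identity \eqref{sol2b} and uses the bounds on $D\sigma$, $Dh$ and on $\gamma$ from Assumption \ref{assumption 4}(i)--(ii).
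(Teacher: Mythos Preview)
Your overall strategy---identify $u$ with a more regular solution $v$ of the \emph{linear} system \eqref{eq2}--\eqref{ini2} via uniqueness---is exactly the paper's, and your key estimate (controlling $(v_{(u)},\Delta v)$ via Lemma \ref{lemma 12.17.1} applied with $\varphi=Dv$, then absorbing $N\hat u|D^2v|^2_{\bL_2}$ using $\hat u\leq\delta/N$) is precisely the mechanism the paper uses. The difference is in how the regular solution $v$ is produced. You build it by hand via Galerkin approximation and a direct It\^o computation for $|Dv^{(n)}|^2_{\bL_2}$. The paper instead recasts \eqref{eq2}--\eqref{ini2} as an abstract evolution equation in the Gelfand triple $V\subset H\subset V^*$ with $H=\cV$ and $V$ the divergence-free subspace of $\bW^2_2$, using the pairing $\langle\varphi,\cdot\rangle$ induced by $((1-\Delta)\varphi,\cdot)_{\bL_2}$; it then verifies the coercivity $2\langle v,A_tv\rangle+|B_tv|^2_{\ell_2(H)}\leq -(\delta/2)|v|^2_V$ and invokes the black-box Theorem~5.1 of \cite{GK2024}. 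Uniqueness of $\cH$-solutions is handled the same way (Proposition \ref{proposition 1.23.2}), by casting the problem at the $H=\cH$, $V=\cV$ level and again appealing to \cite{GK2024}. The trade-offs: the paper's route is shorter and cleaner for the projection/pressure terms, since keeping $(1-\Delta)\varphi$ on the test-function side avoids differentiating through $\cR$ (one simply bounds $((1-\Delta)\varphi,\gamma^{ki}(\cR(\cdots))^i)$ by $K_1|\varphi|_V|\cdot|_{\bL_2}$); your route is more self-contained but requires more bookkeeping---in particular, commuting $D_m$ with $\cR$ and tracking the $D\sigma$, $\partial_xh$, $\partial_uh$ contributions explicitly, and making the Galerkin projections interact correctly with $\cS$, $\cR$. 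Both are valid; the abstract framework just packages the same estimates.
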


To prove this theorem first we prove the following proposition.

\begin{proposition}
                                                          \label{proposition 1.23.2}
Let Assumptions \ref{assumption 1}, \ref{assumption 2}\,(i) and   
\ref{assumption 3}  hold. Let $u$ be an admissible $\bR^d$-valued 
function, such that 
\begin{equation}
                                                                   \label{7.23.2}
u\in L_{\infty}([0,T],\bL_2)\cap L_2([0,T],\bW^1_2)\,(a.s.).
\end{equation}
Then  there is a constant $N=N(d,r)>0$ 
such that 
if $\hat u\leq \delta/N$ then 
\eqref{eq2}-\eqref{ini2} has a unique $\cH$-solution
$v=(v_t)_{t\in[0,T]}$.                                                       
\end{proposition}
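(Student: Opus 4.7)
The plan is to recast \eqref{eq2}--\eqref{ini2} as a linear monotone stochastic evolution equation in the Gelfand triple $V=\cV\subset H=\cH\subset V^{*}$ and then invoke the classical Krylov--Rozovskii/Pardoux existence theorem (or equivalently Theorem \ref{theorem ito} combined with a Galerkin procedure). Applying the Leray projector $\cS$ to \eqref{eq2} eliminates $\nabla p$ and, via \eqref{sol2b}, replaces $\gamma^{ki}D_{i}q^{k}_{t}$ by $\gamma^{ki}(\cR(\sigma^{jk}D_{j}v_{t}+h^{k}_{t}(u_{t})))^{i}$, so the equation takes the abstract form $dv_{t}=(A_{t}v_{t}+F_{t})\,dt+(B^{k}_{t}v_{t}+G^{k}_{t})\,dw^{k}_{t}$ with $A_{t}\colon V\to V^{*}$ and $B^{k}_{t}\colon V\to H$ linear in $v$, while $F_{t}$ and $(G^{k}_{t})$ depend only on the frozen admissible function $u$. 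The initial condition $u_{0}\in\cH$ is $\cF_{0}$-measurable by Assumption \ref{assumption 3}.

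First I would verify that the forcing lies in the right spaces. Using Assumption \ref{assumption 2}\,(i), the bound $|\gamma_{t}|\leq\vartheta_{t}$ from Assumption \ref{assumption 1}, the fact that both $\cS$ and $\cR$ have operator norm $1$, Corollary \ref{corollary b} applied to the term involving $\frf^{i}(u_{t})$, and the integrability \eqref{7.23.2}, essentially the same manipulations that produce \eqref{1.15.2}, \eqref{2.15.2}, \eqref{3.15.2} in the proof of Theorem \ref{theorem 1} yield $\int_{0}^{T}|F_{t}|^{2}_{V^{*}}\,dt+\int_{0}^{T}|G_{t}|^{2}_{\ell_{2}(H)}\,dt<\infty$ almost surely.

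The decisive step is the coercivity estimate: producing $\delta'>0$ and a predictable $K_{t}\geq 0$ with $\int_{0}^{T}K_{t}\,dt<\infty$ a.s.\ so that
\begin{equation*}
2\langle v,A_{t}v\rangle+\sum_{k}|B^{k}_{t}v|^{2}_{H}\leq -\delta'|v|^{2}_{V}+K_{t}|v|^{2}_{H}\qquad\text{for all }v\in V.
\end{equation*}
Expanded, the left side equals $-((2a^{ij}-\sigma^{ik}\sigma^{jk})D_{j}v,D_{i}v)-2\frb(u_{t},v,v)+2(\gamma^{ki}(\cR(\sigma^{jk}D_{j}v+h^{k}_{t}(u_{t})))^{i},v)+R_{t}$, where $R_{t}$ collects lower-order cross terms already controlled by Young's inequality. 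Assumption \ref{assumption 1}\,(i) gives $-2\delta|Dv|^{2}_{L_{2}}$ from the parabolic block. Corollary \ref{corollary b} yields
\begin{equation*}
2|\frb(u_{t},v,v)|\leq N\hat u\,|Dv|^{2}_{L_{2}}+\tfrac{\delta}{4}|Dv|^{2}_{L_{2}}+C\bigl(\hat u^{2}\rho_{0}^{-2}+\bar u_{t}^{2}\bigr)|v|^{2}_{H}
\end{equation*}
with $N=N(d,r)$, and the $\gamma$-term is bounded by $\tfrac{\delta}{4}|Dv|^{2}_{L_{2}}+C(\vartheta_{t}^{2}+\lambda_{t}^{2})|v|^{2}_{H}+C|\frF_{t}|^{2}_{L_{2}}$ via Remark \ref{remark sigma}, the bound on $\cR$, and \eqref{h}. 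Collecting, the coefficient of $|Dv|^{2}_{L_{2}}$ is at most $-(2\delta-N\hat u-\tfrac{\delta}{2})$, strictly negative provided $\hat u\leq\delta/N$ for a suitably enlarged $N=N(d,r)$, which is exactly the smallness hypothesis of the proposition.

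With coercivity and the integrability of $F,G$ in hand, linearity of $A,B$ makes monotonicity automatic, and a standard existence theorem for variational linear SPDEs supplies a unique $H$-valued $\cF_{t}$-adapted continuous process $v$ with $v\in L_{2}([0,T],V)$ a.s.\ satisfying \eqref{1.25.2} for every $\psi\in\cV$. Uniqueness within the $\cH$-solution class follows by applying Theorem \ref{theorem ito} to the difference $v-v'$ of two solutions and running the same energy computation: after multiplication by $e^{-\int_{0}^{t}K_{s}\,ds}$ and the localization argument from the end of the proof of Theorem \ref{theorem 2}, $|v_{t}-v'_{t}|^{2}_{H}$ is dominated by a nonnegative local martingale starting from $0$, forcing $v=v'$. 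The main obstacle is the coercivity step, because $u$ is \emph{not} assumed to be divergence-free, so Corollary \ref{corollary 0} does not apply and $\frb(u_{t},v,v)$ is genuinely present; the Morrey smallness $\hat u\leq\delta/N$ is precisely what is needed to absorb it into the parabolic dissipation.
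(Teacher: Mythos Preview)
Your strategy is essentially the paper's: cast \eqref{eq2}--\eqref{ini2} as a linear variational SPDE in the triple $\cV\subset\cH\subset\cV^{*}$, use the Morrey smallness $\hat u\leq\delta/N$ to absorb the convection term $\frb(u_t,v,v)$ into the parabolic dissipation, and invoke an abstract existence--uniqueness theorem. The organizational difference is that you keep the full $u=u^{M}+u^{B}$ and the $\gamma$--term inside a single operator $A_t$, accepting a random $K_t$ on the right of coercivity, whereas the paper separates: only $u^{M}$ (plus an artificial damping $N_0\rho_0^{-2}\hat u^{2}+\delta$) enters $A_t$, the $u^{B}$ and $\gamma$ contributions go into an auxiliary operator $\sca_t:V\to H$ with $\int_0^T|\sca_t|^{2}\,dt<\infty$, and the damping is undone by $\scc_t:H\to H$. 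This decomposition is tailored to Theorem~5.1 of \cite{GK2024}, which admits exactly such perturbations; your version requires a variational theorem that tolerates a random $K_t\in L_1([0,T])$, which is also available. Your observation that $u$ is not assumed divergence-free, so $\frb(u_t,v,v)$ cannot be dropped via Corollary~\ref{corollary 0}, is the right explanation for why the smallness hypothesis is essential.

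One genuine slip: the claim $\int_0^T|F_t|^{2}_{V^{*}}\,dt<\infty$ fails in general for the part coming from $f_t(u_t,Du_t)$, since Assumption~\ref{assumption 2}(i) only gives $|f_t(u_t,Du_t)|_{\bL_2}\leq\kappa_t|u_t|_{\bL_2}+\chi_t|Du_t|_{\bL_2}+|\frG_t|_{L_2}$ and neither $\int_0^T\kappa_t^{2}\,dt$ nor $\int_0^T\chi_t^{2}|Du_t|^{2}_{\bL_2}\,dt$ is controlled. What you do get (and what the paper uses) is $\int_0^T|F_t|_{H}\,dt<\infty$, so you must invoke an existence result---such as Theorem~5.1 of \cite{GK2024} or a Galerkin argument paired with Theorem~\ref{theorem ito}---that accepts an $L_1$-in-time $H$-valued forcing in addition to the $L_2$-in-time $V^{*}$ part. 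Also, the $h^{k}_t(u_t)$ contribution to the $\gamma$--term belongs to $F_t$, not to $A_t$, so the stray $C|\frF_t|^{2}_{L_2}$ in your coercivity line should not appear there; and the reference to Corollary~\ref{corollary b} for $\frf^{i}_t(u_t)$ is misplaced (that bound comes directly from \eqref{h}).
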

\begin{proof}
We are going to cast the above system of equations   
into a stochastic evolution equation 
of the type considered in \cite{GK2024}. 
For this reason set $V:=\cV$, $H:=\cH$, and for 
every $(\omega,t)$ define the linear operators 
$A_t:V\to V^{\ast}$ and 
$B_t=(B^k_t)_{k=1}^{\infty}: V\to \ell_2(H)$ by requiring 
$$
\langle \varphi,A_{t}v\rangle=
-\big(\varphi,u^{Mi}_tD_iv 
+N_{0}\rho_{0}^{-2}\hat u^2v +\delta v\big)-(D_i\varphi, a^{ij}_tD_jv )
$$
\begin{equation}  
                                                                      \label{2.23.2}                                                           
(\varphi,B^k_tv)_H=\big(\varphi,\sigma^{ik}_tD_iv\big), 
\quad k=1,2,...,                                                                                                                              
\end{equation}
where $(\cdot,\cdot)$ and $(\cdot,\cdot)_{H}$ 
are the scalar products in $\bL_{2}$ 
and in $H$, respectively,  
and $N_{0}=N_0(d,r)$ 
is a constant, specified later. 
By Lemma \ref{lemma 12.17.1}
$$
(\varphi,u^{Mi}_tD_iv)\leq |u^{Mi}_t\varphi|_{\bL_2}|D_iv|_{\bL_2} 
\leq N\hat u(|D\varphi|_{L_2}+\rho_0^{-1}|\varphi|_{\bL_2})|v|_{H}  
$$
with a constant $N=N(d,r)$, and 
$$
(D_i\varphi, a^{ij}_tD_jv )\leq N|D\varphi|_{L_2}|Dv|_{L_2}, 
$$
$$
\sum_{k=1}^{\infty}\big|(\varphi,\sigma^{ik}_tD_iv)\big|^2
\leq N^2|\varphi|^2_{H}|Dv|^2_{L_2}
$$
with a constant $N=N(d,\delta)$. Thus $A_t$ and $B_t$ 
are bounded linear operators mapping $V$ into $V^{\ast}$ 
and $\ell_2(H)$, respectively, and their operator 
norms are bounded by $N(d,r,\rho_0,\delta,\hat u)$ 
and $N(d,\delta)$, respectively.  
Moreover, for $v\in V$ 
$$
\langle v,A_{t}v\rangle
\leq -(D_iv, a^{ij}_tD_jv)
-(N_0\rho_{0}^{-2}\hat u^2+\delta)|v|_H^2
$$
\begin{equation}
                                                                 \label{3.23.2}
+(\delta/2)|Dv|^2_{L_2} 
+N_1\hat u |Dv|^2_{L_2}
+N_2\rho_{0}^{-2}\hat u^2|v|^{ 2}_{H}
\end{equation}
with constants $N_1=N_1(d,r)$ and $N_2=N_2(d,r,\delta)$. 
From \eqref{2.23.2} 
$$
|B_tv|^2_{\ell_2(H)}
\leq ||\sigma_t^{i}D_iv|_{\ell_2(\bR^d)}|^2_{L_2}. 
$$
Combining this with \eqref{3.23.2} we get 
$$
 2\langle v, A_{t}v\rangle+|B^{\cdot}_{t}v|^{2}_{\ell_{2}(H)}
 \leq -\big((2a^{ij}-\sigma^{ik}\sigma^{jk})D_jv,D_iv\big)
 $$
 $$
-2(N_0\rho_{0}^{-2}\hat u^2+\delta)|v|_H^2+\delta|Dv|^2_{L_2} 
+2N_1\hat u |Dv|^2_{L_2}
+2N_2\rho_{0}^{-2}\hat u^2|v|^{ 2}_{H}
$$
for $v\in V$.  Hence using Assumption \ref{assumption 1}\,(i) and taking 
$N_0:=N_2$, for $\hat u\leq \delta/(2N_1)$ we have 
\begin{equation}
                                                                                       \label{4.23.2}
 2\langle v, A_{t}v\rangle+|B^{\cdot}_{t}v|^{2}_{\ell_{2}(H)}\leq -\delta |v|^2_{V}  
 \quad\text{for $v\in V$}.                                                                     
\end{equation}
Define for each $(\omega,t)$ the linear operators $\sca_{t}:V\to H$  
and $\scc_t: H\to H$ 
such that 
for all $\varphi\in V$  and $v\in H$ 
\begin{align}
(\varphi,\sca_{t}v)_H
=&(\varphi,-u^{Bi}_tD_iv+\gamma^{ki}(\cR(\sigma_t^{jk}D_jv)^i)     \nonumber\\ 
(\varphi,\scc_tv)_H
=&\big(\varphi,N_{0}\rho_{0}^{-2}v+\delta v\big). \nonumber
\end{align} 
Using the admissibility of $u$, condition \eqref{gamma}  
and that the operator norm 
of $\cR:\bL_2\to\bL_2$ is 1,  we get 
$$
(\varphi,-u^{Bi}_tD_iv+\gamma^{ki}(\cR(\sigma_t^{jk}D_jv)^i)
\leq 
N(\bar u_t+\vartheta_t)|\varphi|_{\bL_2}|Dv|_{L_2}, 
$$
and clearly, 
$$
(\varphi,N_{0}\rho_{0}^{-2}v+\delta v\big)
\leq (N_{0}\rho_{0}^{-2}v+\delta)|\varphi|_{\bL_2}|v|_{\bL_2}.
$$
Hence $\sca_{t}:V\to H$  
and $\scc_t: H\to H$ are bounded operators, and for their 
operator norms, 
$|\sca_t|_{\sca}$ and $|\scc_t|_{\scc}$ we have 
\begin{equation}
                                                                                    \label{5.23.2}
\int_0^T\big(|\sca_t|_{\sca}^2+|\scc_t|_{\scc}\big)\,dt<\infty\,\, (a.s.). 
\end{equation}
Finally for each $(\omega,t)$ we define 
$F^{\ast}_t\in V^{\ast}$, $F_t\in H$ 
and $G_t=(G_t^k)\in\ell_2(H)$ such that 
$$
\langle\varphi, F^{\ast}_t\rangle=(D_i\varphi,\frf_t^i(u_t)), 
\quad
(\varphi,F_t)_{H}=(\varphi,f_t(u_t,Du_t)), 
$$
$$
(\varphi,G^k_t)_{H}=(\varphi,h^k_t(u_t)), 
\quad k=1,2,....,
$$
for $\varphi\in V$. By Assumption \ref{assumption 2}\,(i), 
$$
(D_i\varphi,\frf_t^i(u_t))\leq N|\varphi|_{V}(\lambda_t|u_t|_{\bL_2}
+|\frF_t|_{L_2}),
$$
$$
(\varphi,f_t(u_t,Du_t))
\leq 
N|\varphi|_{H}\big(\kappa_t|u_t|_{\bL_2}+\chi_t|Du_t|_{L_2}+|\frG_t|_{L_2}\big),
$$
with a constant $N=N(d)$, and 
$$
\sum_{k=1}^{\infty}|(\varphi,h^k_t(u_t))|^2
\leq N|\varphi|^2_{\bL_2}(\lambda^2_t|u_t|^2_{\bL_2}
+|\frF_t|^2_{L_2}). 
$$
Hence, due to \eqref{lambda} and \eqref{7.23.2} 
it is easy to check that 
\begin{equation}
                                                                        \label{6.23.2}
\int_0^T\big(|F^{\ast}_t|_{V^{\ast}}^2+|G_t|_{\ell_2(H)}^2
+|F_t|_{H}\big)\,dt<\infty \,\,(a.s.).                                                                       
\end{equation}
Consider now the stochastic evolution equation 
\begin{equation}
                                                                       \label{eeq1}
dv_t=(A_t v_t+\sca_t v_t
+\scc_t v_t+F^{\ast}_t+F_t)\,dt
+(B^k_t v_t+G_t^k)\,dw^k_t, 
\quad t\in(0,T]
\end{equation}
\begin{equation}
                                                                       \label{eini}
v_{t}\big|_{t=0}=u_{0}. 
\end{equation}
Then we can see that $v=(v_t)_{t\in[0,T]}$ 
is a $\cH$-solution 
to \eqref{eq2}-\eqref{ini2} if and only if it is an $H$-solution 
to \eqref{eeq1}-\eqref{eini}. Due to \eqref{4.23.2}, \eqref{5.23.2} 
and \eqref{6.23.2} we can apply Theorem 5.1 in \cite{GK2024}  
to get that 
there is a unique $H$-solution to \eqref{eq2}-\eqref{ini2}, 
which finishes the proof of the proposition. 
\end{proof} 

\begin{proof}[Proof of Theorems \ref{theorem 3}]
Now we set $H=\cV$, and let $V$ be the subspace of divergence free functions 
of $\bW^2_2$. If $h\in\cV$ then $h^{(\varepsilon)}$, the convolution of $h$ with 
$k_{\varepsilon}=\varepsilon^{-d}k(\cdot/\varepsilon)$ for 
a smooth function on $\bR^d$ with compact support and of unit integral, 
belongs to $V$ for every $\varepsilon>0$, and $h^{(\varepsilon)}\to h$ in $H$ 
as $\varepsilon\to0$. This shows that $V$ is a dense subset of $H$. 
It is also separable, as a closed 
subspace of the separable Hilbert space $\bW^2_2$. 
Define for each $(\omega,t)$ the linear operators 
$$
\text{$A_{t}:\,\,V\to V^{*}$
\quad and \quad 
$B_{t}:\,\,V\to \ell_2(H)$}
$$
by requiring
$$
\langle \varphi,A_{t}v\rangle=
-\big((1-\Delta)\varphi,u^{Mi}_tD_iv\big) 
-\big((1-\Delta)\varphi, N_{0}(\rho_{0}^{-2}\hat u^2+1)v\big)
$$
\begin{equation}
                                              \label{1.20.2}
 -\big((1-\Delta)\varphi, \delta v\big)
-(D_i\varphi, a^{ij}_tD_jv )                                             
-(D_kD_i\varphi,a^{ij}_tD_kD_jv), 
\end{equation}
and 
\begin{equation}  
                                                           \label{2.20.2}
(\varphi,B^k_tv)_H=\big(\varphi,\sigma^{ik}_tD_iv\big)                                                                                                                              
+\big(D_l\varphi,\sigma^{ik}_tD_lD_iv\big)
\quad\text{for $k=1,2...,$}  
\end{equation}
to hold for any $\varphi,v\in V$,
where $(\cdot,\cdot)$ and $(\cdot,\cdot)_{H}$ 
are the scalar products in $\bL_{2}$ 
and in $H$, respectively, and $N_{0}=N_0(d,r)$ 
is a constant, specified later. 

Note that by Lemma \ref{lemma 12.17.1}
$$
((\Delta-1)\varphi,u^{Mi}_tD_iv)\leq |\varphi|_{V}|u^{Mi}_tD_iv|_{\bL_2}
\leq N\hat u|\varphi|_{V}(|D^2v|_{L_2}+\rho_0^{-1}|Dv|_{L_2}), 
$$
with $N=N(d,r)$, and clearly,
$$
-(D_i\varphi, a^{ij}_tD_jv )-(D_kD_i\varphi,a^{ij}_tD_kD_jv)
\leq N|\varphi|_{V}|v|_V
$$
with a constant $N=N(d,\delta)$. Hence it is easy to see that 
$A_t$ is a bounded linear operator from 
$V$ into $V^{\ast}$ with operator norm bounded by a constant $N$ 
depending only on $d$,  $r$, $\delta$, $\rho_0$ and $\hat u$. 
Moreover, for $v\in V$ 
$$
\langle v,A_{t}v\rangle
\leq -(D_iv, a^{ij}_tD_jv)
-(D_kD_iv,a^{ij}_tD_kD_jv)
-(N_0\rho_{0}^{-2}\hat u+N_0)|v|_H^2
$$
\begin{equation}
                                                                 \label{2.4.5.24}
-\delta|v|_H^2                                                                 
+(\delta/8)|v|^2_V 
+N_1\hat u |v|^2_{V}
+N_2\rho_{0}^{-2}\hat u^2|v|^{ 2}_{H}
\end{equation}
with constants $N_1=N_1(d,r)$ and $N_2=N_2(d,r,\delta)$.  
Note also that 
$$
\sum_{k=1}^{\infty}|\big(\varphi,\sigma^{ik}_tD_iv\big)                                                                                                                              
+\big(D_l\varphi,\sigma^{ik}_tD_lD_iv)\big)|^2
$$
$$
\leq 2| \varphi|^2_{H}\Big(\sum_{k}|\sigma_t^{ik}D_iv|^2_{\bL_2}
+\sum_{k}|\sigma^{ik}_tD_lD_iv)|^2_{\bL_2}\Big)
$$
$$
\leq N|v|^2_{H}(|Dv|^2_{L_2}+|D^2v|_{L_2})
\leq N|v|^2_{H}|v|^2_{V}
$$
with a constant $N=N(d,\delta)$, which shows that $B_t=(B_t^k)$ 
is bounded linear map from $V$ into $\ell_2(H)$, with operator norm 
bounded by $N$.  From \eqref{2.20.2} we get 
$$
|B_tv|^2_{ H }\leq \sum_{k=1}^{\infty}|\sigma_t^{ik}D_iv|^2_{\bL_2 }
+\sum_{k=1}^{\infty}\sum_{j=1}^d|\sigma_t^{ik}D_jD_iv|^2_{ \bL_2 }+
\tfrac{\delta}{2}|D^2v|^2_{L_2}+N_3|Dv|^2_{L_2}
$$
with a constant $N_3=N_3(d,\delta)$. Combining this with 
\eqref{2.4.5.24} and using that due to Assumption 
\ref{assumption 1} 
$$
-2(D_iv, a^{ij}_tD_jv)
-2(D_kD_iv,a^{ij}_tD_kD_jv)
$$
$$
+
\sum_k|\sigma^{ik}D_iv|^2_{\bL_2 }
+\sum_k\sum_j|\sigma^{ik}D_jD_iv|^2_{ \bL_2 }
\leq -2\delta(|Dv|^2_{L_2}+|D^2v|_{L_2}), 
$$
we obtain 
$$
 2\langle v, A_{t}v\rangle+|B^{\cdot}_{t}v|^{2}_{\ell_{2}(H)}
 \leq -\delta(|Dv|^2_{L_2}+|D^2v|^{2}_{L_2})
 $$
 $$
+2N_2\rho_{0}^{-2}\hat u^2|v|^{ 2}_{H}+
2N_1\hat u|v|^2_V+N_3|v|^2_{H}
 -2N_{0}(\rho_{0}^{-2}
 \hat u^2+1)|v|_{H}^{2}-2\delta |v|^2_{H}
$$ 
Hence for $N_{0}=N_3+N_2$ and
$N_1\hat u\leq \delta/2$
we get
\begin{equation}
                                                                                       \label{10.23.2}
 2\langle v, A_{t}v\rangle+|B^{\cdot}_{t}v|^{2}_{\ell_{2}(
H)}\leq -(\delta/2)|v|^{2}_{V} 
\end{equation}
for $v\in V$. 

Next, we define the linear operators $\sca^{*}_{t}:H\to V^{*}$,
$\scc_t: H\to H$ and 
$\scb_{t}: H\to \ell_2(H)$ 
by requiring that 
for all $\varphi\in V$  and $v\in H$ 
\begin{align}
\langle \varphi,\sca^{*}_{t}v\rangle
=&-((1-\Delta)\varphi,u^{Bi}_tD_iv )_{\bL_{2}}
-(D_kD_i\varphi, D_k a^{ij}_tD_jv)_{\bL_{2}}           \nonumber\\ 
&+\big((1-\Delta)\varphi, \gamma^{ki}(\cR \sigma^{jk}D_jv)^i\big),\nonumber\\
(\varphi,\scb^k_tv)_H
=&(D_l\varphi,D_l\sigma_t^{ik}D_iv)_{\bL_{2}},             \nonumber\\
(\varphi,\scc_tv)_H
=&\big(
(1-\Delta)\varphi,N_{0}(\rho_{0}^{-2}\hat u^2+1)v+\delta v
\big)_{\bL_{2}}                                                              \nonumber
\end{align} 
hold. Due to Assumption \ref{assumption 4} and that $u$ is admissible, 
we have 
$$
((1-\Delta)\varphi,u^{Bi}_tD_iv)
+(D_kD_i\varphi, D_k a^{ij}_tD_jv)_{\bL_{2}}
\leq N(\bar u_t+\theta_t)|\varphi|_{V}|v|_{H},
$$
$$
\Big((1-\Delta)\varphi, \gamma^{ki}(\cR\big(\sigma_t^{jk}D_jv)\big)^i\Big)
\leq
K_1|\varphi|_{V}
\Big(\sum_{k=1}^{\infty}|\cR(\sigma_t^{jk}D_jv)|^2_{\bL_2}\Big)^{1/2}
$$
$$
\leq K_1|\varphi|_{V}\Big(\sum_{k=1}^{\infty}|\sigma_t^{jk}D_jv|^2_{\bL_2}\Big)^{1/2}
\leq 
NK_1|\varphi|_{V}|v|_{H}, 
$$
$$
\sum_{k}|(D_l\varphi,D_l\sigma_t^{ik}D_iv)_{\bL_{2}}|^2
\leq N\theta_t^2|\varphi|^2_{V}|v|^2_{H}
$$
with a constant $N=N(d,\delta)$ and with the constant $K_1$ 
and process $\theta$ from Assumption 
\ref{assumption 4}.  
Clearly, 
$$ 
|\scc_tv|_{H}\leq (N_{0}\rho_{0}^{-2}\hat u^2+N_0+\delta) |v|_H.  
$$
Consequently, $\sca_{t}$, $\sca^{*}_{t}$, 
$\scb_{t}$ and  $\scc_t$ are bounded linear operators
for every $(\omega,t)$, and their operator norms, 
$|\sca^{*}_{t}|$, 
$|\scb_{t}|$ and  $|\scc_t|$, satisfy
\begin{equation}
                                                                                   \label{11.23.2}
\int_0^T\big[|\sca^{*}_{t}|^2+|\scb_{t}|^2+|\scc_{t}|\big]\,dt<\infty\,\, (a.s.). 
\end{equation}
For each $(\omega,t)$ we  define $F^{\ast}_t\in V^{\ast}$, 
$G_t=(G^k_t)\in \ell_2(H)$ by requiring that 
$$
\langle\varphi, F^{\ast}_t\rangle
=((1-\Delta)\varphi, f_t(u_t,Du_t)+\gamma^{ki}_t(\cR h^k_t(u_t))^i), 
$$
\begin{equation*} 
(u,G^k_{t})_{H}=((1-\Delta)\varphi,h^{k}_{t}(u_t))
\end{equation*}
hold for all $\varphi\in V$. By Assumption \ref{assumption 4} 
we have 
$$
((1-\Delta)\varphi, f_t(u_t,Du_t))\leq |\varphi|_V
(\theta_t|u_t|_{\bL_2}+K_3|Du_t|_{L_2}+|\frH_t|_{L_2}), 
$$
$$
((1-\Delta)\varphi, \gamma^{ki}_t(\cR h^k_t(u_t))^i)
\leq |\varphi|_{V}K_1||h_t(u_t)|_{\ell_2(\bR)}|_{L_2}
$$
$$
\leq K_1 |\varphi|_{V}(\lambda_t|u_t|_{\bL_2}+|\frF_t|_{\bL_2}), 
$$
$$
\sum_{k=1}^{\infty}|((1-\Delta)\varphi,h^{k}_{t}(u_t))|^2\
$$
$$
\leq 
2|\varphi|^2_{\bL_2}||h_{t}(u_t)|_{\ell_2(\bR^d)}|^2_{L_2}
+2||D\varphi|_{L_2}|^2|D(h_t(u_t))|_{\ell_2(\bR^{d\times d})}||^2_{L_2}
$$
$$
\leq N|\varphi|^2_H((\lambda_t^2+\theta_t^2)|u_t|^2_{\bL_2}+K_2^2|Du_t|^2_{L_2}
+\frF_t^2+\frH_t^2), 
$$
which show that 
\begin{equation}
                                                                                     \label{12.23.2}
\int_0^T\big(|F^{\ast}_t|^2_{V^{\ast}}+|G_t|^2_{\ell_2}\big)\,dt<\infty \,\, (a.s.).
\end{equation}
Let us now consider the evolution equation 
\begin{equation}
                                                                   \label{8.23.2}
dv_t=(A_tv_t+\sca^{\ast}_tv_t+\scc_tv_t+F_t^{\ast})\,dt 
+(B^k_tv_t+\scb^{k}_t v_t+G^k_t)\,dw^k_t, 
\end{equation}
\begin{equation}
                                                                   \label{9.23.2} 
v_{t}\big|_{t=0}=u_{0}. 
\end{equation}
Then due to \eqref{10.23.2}, \eqref{11.23.2} and 
\eqref{12.23.2} we can apply Theorem 5.1 in 
 \cite{GK2024} to get that \eqref{8.23.2}-\eqref{9.23.2} 
 has a unique $H$-solution $v=(v_t)_{t\in[0,T]}$. Hence it follows 
that almost surely \eqref{1.25.2} holds for $t\in[0,T]$ 
 and $\psi=(1-\Delta)\varphi$ for all divergence-free vector fields 
 $\varphi\in\bW^3_2$, which implies that $v$ is the unique $\cH$-solution 
 of \eqref{eq2}-\eqref{ini2}. By Proposition \ref{proposition 1.23.2} 
there is a constant $N=N(d,r)>0$  
 such that if $\hat u\leq \delta/N$ then $u$ is the unique $\cH$-solution 
 to \eqref{eq2}-\eqref{ini2}. Consequently, there is a 
 (finite) constant 
 $N=N(d,r)>0$ such that  if $\hat u\leq\delta/N$, 
 then almost surely $u_t=v_t$ 
 for all $t\in[0,T]$, that finishes 
 the proof of the theorem. 
 \end{proof}                                    
 \begin{remark}
                                                                  \label{remark Vsol}
We note that if in Definition \ref{definition HL} of a solution $u$ 
to \eqref{eq}-\eqref{ini} we require 
$u\in L_{2}([0,T],\cV)$ (a.s.) instead of 
 $u\in L_{2}([0,T],\cV)\cap L_{\infty}([0,T],\cH)$ (a.s.),  then 
  Theorems \ref{theorem 1},  \ref{theorem 2} and \ref{theorem 3} 
  remain valid, provided in Assumption \ref{assumption 2}  
  we assume that $\lambda$ is a nonnegative constant  and $\kappa$   
  is a nonnegative predictable process such that 
  $$
  \int_0^T\kappa_t^2\,dt<\infty\,\,\text{(a.s.)}, 
  $$ 
  instead of assuming that $\lambda$ and $\kappa$ 
  are nonnegative predictable processes 
  such that 
  $$
  \int_0^T\lambda^2_t+\kappa_t\,dt<\infty\,\,\text{(a.s.)}.
  $$                                           
\end{remark}

\begin{proof}
Notice that in the proof of Theorem \ref{theorem 1} we only use 
$u\in L_{\infty}([0,T],\cH)$ (a.s.) when we prove that 
$$
I:=\int_0^T\lambda_t^2|u_t|^2_{\bL_2}+\kappa_t|u_t|_{\bL_2}
+\vartheta_t\lambda_t|u_t|_{\bL_2}\,dt<\infty\,\,\text{(a.s.)}, 
$$
see \eqref{esssup 1}, \eqref{esssup 2} and \eqref{esssup 3}. Clearly, 
if $\lambda$ is a nonnegative constant and $\kappa\in L_2([0, T],\bR)$ (a.s.)
then 
$$
I\leq \lambda^2\int_0^T|u_t|^2_{\bL_2}\,dt
+\Big(\int_0^T\kappa^2_t\,dt\Big)^{1/2}\Big(\int_0^T|u_t|^2_{\bL_2}\,dt\Big)^{1/2}
$$
$$
+\lambda\Big(\int_0^T\vartheta_t^2\,dt\Big)^{1/2}
\Big(\int_0^T|u_t|^2_{\bL_2}\,dt\Big)^{1/2}<\infty\,\, \text{(a.s.)}, 
$$
which proves the remark. 
\end{proof}


\begin{thebibliography}{mm}

\bibitem{AV2024} A. Agresti and M.Veraar, Stochastic Navier–Stokes equations 
for turbulent flows in critical spaces,  

 
\bibitem{AC1990}  
S. Albeverio and A.B. Cruzeiro, Global flow 
and invariant (Gibbs) measure for Euler and Navier-Stokes 
two dimensional fluids, Comm. Math. Phys. 129 (1990), 431-444.

\bibitem{AF2002} S. Albeverio and B. Ferrario, 
Uniqueness results for the generators of the two-dimensional 
Euler and Navier-Stokes flows. The case of Gaussian invariant measures,  
J. Funct. Anal. 193 (2002), no. 1, 77-93.

\bibitem{AF2004} 
S. Albeverio and B. Ferrario, 
Uniqueness of solutions of the stochastic Navier-Stokes 
equation with invariant measure given by the enstrophy, 
Ann. Probab. 32 (2004), no. 2, 1632-1649.
 

\bibitem{BR2017} V. Barbu and M. R\"ockner, 
Global solutions to random 3D vorticity equations for
small initial data, J. Differential Equations 263 (2017) 5395-5411. 

 
\bibitem{B1995}
A. Bensoussan, 
Stochastic Navier-Stokes equations, Acta Appl. Math. 38 (1995), 267-304.
 

\bibitem{BT1973} A. Bensoussan and R. Temam, 
Equations Stochastiques du Type Navier-Stokes, 
Journal of Functional Analysis 13 (1973),195-222. 

\bibitem{BF2020} L. A. Bianchi and F. Flandoli,
Stochastic Navier-Stokes Equations and Related Models, 
Milan J. Math. Vol. 88 (2020) 225-246. 

 
\bibitem{B2002} 
J. Bricmont, Ergodicity and mixing for stochastic partial differential equations, in Pro-
ceedings of the International Congress of Mathematicians (Beijing, 2002), Vol. 1 (Higher
Ed. Press, Beijing, 2002), pp. 567--585.

%\bibitem{BKL} J. Bricmont, A. Kupiainen and R. Lefevere, Exponential mixing 
%for the 2D stochastic
%Navier–Stokes dynamics, Comm. Math. Phys. 230(1) (2002), 87--132 .
 

\bibitem{BCF1992} 
Z. Brze\'zniak, M. Capi\'nski and F. Flandoli, 
Stochastic Navier–Stokes equations with multiplicative noise. Stoch. Anal. Appl. 10 (1992), 523–532.

 
\bibitem{CC1991} M. Capinski, N. Cutland, 
Stochastic Navier–Stokes equations, Acta Appl. Math. 25 (1991), 59-85. 

\bibitem{CG1994} 
M. Capinski, D. Gaterek, Stochastic equations in Hilbert space with application to 
Navier-Stokes equations in any dimension, J. Funct. Anal. 126 (1994), no. 1, 26-35.

\bibitem{CP2001}
M. Capinski, S. Peszat, On the existence of a solution to stochastic Navier-Stokes equations. Nonlinear Anal. 44 (2001), no. 2, Ser. A: Theory Methods, 141-177.

\bibitem{DD2002} G. Da Prato, A. Debussche, 
Two-dimensional Navier-Stokes equations driven by a space-time white noise. 
J. Funct. Anal. 196 (2002), no. 1, 180--210.

\bibitem{DD2000}
G. Da Prato, A. Debussche, Dynamic programming for
 the stochastic Navier-Stokes equations, M2AN Math. Model Numer. Anal. 34 (2000), n.2, 459-475.


\bibitem{DD2003}
G. Da Prato, A. Debussche, Ergodicity for the 3D stochastic Navier-Stokes equations, 
J. Math. Pures  Appl. 82 (2003), 877-947. 

 
\bibitem{DoDa2009}
H. Dong and D. Dapeng, 
The Navier-Stokes Equations in the Critical Lebesgue Space, 
Commun. Math. Phys. 292 (2009), 811--827 

\bibitem{WM2001} 
 W. E and J.C. Mattingly, Ergodicity for the Navier-Stokes equation with degenerate random forcing: finite-dimensional approximation, Comm. Pure Appl. Math., vol. 54 no. 11 (2001), pp. 1386--1402.
 
 \bibitem{WMS2001} W. E, J.C. Mattingly and  Ya. Sinai, 
Gibbsian dynamics and ergodicity for the stochastically 
forced Navier-Stokes equation, Comm. Math. Phys. 224 (2001), no. 1, 83--106.

 
\bibitem{ESS2003}
L. Escauriaza, G. Seregin and V. \v{S}ver\'ak, $L_{3,\infty}$-solutions of Navier-Stokes equations and backward uniqueness (In Russian). Usp. Mat. Nauk 58(2)(350), 3--44 (2003); 
translation in Russ. Math. Surv. 58(2),
211--250 (2003).

\bibitem{F2021} R. Farwiq, From Jean Leray to the millennium problem: the Navier–Stokes equations, J. Evol. Equ. 21 (2021), 3243--3263.
 
\bibitem{F2008} F. Flandoli, An Introduction to 3D Stochastic Fluid Dynamics, 51-150, 
in Lecture Notes in Mathematics 1942, Springer
 Verlag Berlin Heidelberg 2008. 

 
\bibitem{FM1995} F. Flandoli and B. Maslowski, 
Ergodicity of the 2-D Navier-Stokes Equation
Under Random Perturbations
Commun. Math. Phys. 171, 119-141 (1995).  

\bibitem{F2003} B. Ferrario, Uniqueness result for 
the 2D Navier-Stokes 
equation with additive
noise, Stoch. Stoch. Rep. 75 (2003), no. 6, 435-442.

\bibitem{FY1992} H. Fujita Yashima, Equations de 
Navier-Stokes Stochastiques non Homogènes et Applications.
 Scuola Normale Superiore, Pisa, 1992.

 \bibitem{GK2024}
 I. Gy\"ongy and N. V. Krylov, 
 Once again on evolution equations with monotone operators
in Hilbert spaces and applications, 

\bibitem{H1951} E. Hopf, \"Uber die Anfangswertaufgabe \"uber die 
hydrodynamischen Grundgleichungen, Math. Nachr., 4 (1951) 213-231. 
 
\bibitem{K2022b} N.V. Krylov, Estimates in $L_p$ 
for solutions to SPDEs with coefficients 
in Morrey classes, {\it Stoch. PDE: Anal. Comp.}, 2022. 

 \bibitem{Kr_23} N.V. Krylov, On strong solutions of It\^o's equations 
with $D\sigma$ and $b$ in Morrey classes containing 
$L_{d}$, {\it Ann. Probab.} 51 (5) (2023) 1729--1751.

\bibitem{KWX2024} I. Kukavica, F. Wang  and F. Xu, 
Local existence of the stochastic Navier-Stokes equations in the whole space, 
Stochastics and Partial Differential Equations: Analysis and Computations,  https://doi.org/10.1007/s40072-024-00341-0.

\bibitem{K2023} N.V. Krylov, On parabolic equations 
in Sobolev spaces with lower order coefficients from Morrey spaces, 
arXiv:2311.03238. 

 
\bibitem{KP2005} S.B. Kuksin and O. Penrose, 
A family of balance relations for the two-dimensional Navier-Stokes equations with random forcing. J. Stat. Phys. 118 (2005), no. 3-4, 437--449. 

\bibitem{KS2000}
S.B. Kuksin and A. Shirikyan
Stochastic dissipative PDEs and Gibbs measures, 
Comm. Math. Phys. 213 (2000), no. 2, 291--330.

\bibitem{KS2001} 
S.B. Kuksin and  A. Shirikyan, Ergodicity for the randomly forced 2D Navier-Stokes equations, Math. Phys. Anal. Geom. 4 (2001), no. 2, 147--195.


\bibitem{L1963} O. A. Ladyzhenskaya, 
The mathematical theory of viscous incompressible flow,  New York, NY: Gordon \& Breach 1963.

\bibitem{LSU1967} O.A. Ladyzhenskaya, 
V.A.  Solonnikov, N.N.  Ural’tseva, Linear and quasi-linear
parabolic equations, Nauka, Moscow, 1967 in Russian. English translation: 
American Math. Soc., Providence, 1968. 

\bibitem{L1934} J. Leray, 
Sur le mouvement d'un liquide visqueux emplissant l'espace, 
Acta Math. 63 (1934), no. 1, 193--248.

 
\bibitem{L1969}J.L. Lions, Quelques m\'ethods de r\'esolution de probl\`emes 
aux limites non lin\'eaires, Dunod Gauthier-Villars, Paris 1969.

 
\bibitem{MH2006}
 J.C. Mattingly and M. Hairer, 
 Ergodicity of the 2D Navier–Stokes equations with degenerate stochastic forcing.  Ann. of Math. (2) 164 (2006), no. 3, 993--1032.
 
\bibitem{MS2002}
J.L. Menaldi, S.S. Sritharan, Stochastic 2-D Navier–Stokes equation, 
Appl. Math. Optimiz. 46 (2002), 31--53.
 
\bibitem{MR2001} R. Mikulevicius and B. L. Rozovskii, 
Stochastic Navier-Stokes Equations. In {\it Stochastics in Finite and 
Infinite Dimensions: in Honor of Gopiah Kallianpur} (T. Hida et al., eds.) 
285-302, Kirkhauser, Boston.

\bibitem{MR2004} R. Mikulevicius and B. L. Rozovskii, 
Stochastic Navier-Stokes Equations  for turbulent flows, 
Siam J. Math. Anal., Vol. 35 (2004), No. 5, pp. 1250-1310.

\bibitem{M2009} R. Mikulevicius, On strong  $H^1_2$-solutions 
of Stochastic Navier-Stokes Equations in a bounded domain, 
Siam J. Math. Anal., Vol. 41 (2009), No. 3, pp. 1206 -1230.

\bibitem{MR2005} R. Mikulevicius and B. L. Rozovskii, 
Global $L_2$-Solutions of stochastic Navier-Stokes Equations, 
The Annals of Probability,  Vol. 33 (2005), No. 1, 137--176.

\bibitem{MS2017} M. T. Mohan and S. S. Sritharan, 
$\bL_p$-solutions of the stochastic Navier-Stokes equations 
subject to L\'evy noise with $\bL_m(\bR^m)$ initial data,
Evol. Equ. Control Theory 6 (2017), no. 3, 409--425.

\bibitem{L1967}
O. Ladyzhenskaya, On the uniqueness and smoothness of 
generalised solutions to the Navier-Stokes equations. 
{\em Zap. Nauchn. Sem. Leningrad. Otdel. Mat. Inst. Steklov. (LOMI)\/} 
5 (1967), 169-185; English
transl.: Sem. Math. V. A. Steklov Math. 
Inst. Leningrad 5, 60--66. 

\bibitem{Prodi1959} 
G. Prodi, Un teorema di unicità per le equazioni di Navier-Stokes,  
{\em Ann. Mat. Pura Appl.\/} 48 (1959), 173--182. 


\bibitem{R2022} J.C. Robinson, The Navier-Stokes regularity problem. 
Phil. Trans. R. Soc. A 378: 20190526, 2022. 

\bibitem{Serrin1962} J. Serrin, On the interior regularity 
of weak solutions of Navier-Stokes equations, 
{\em Arch. Rat. Mech. Anal.\/} 9 (1962), 187--195. 

\bibitem{T1977} R. Temam, Navier-Stokes equations,  
North-Holland Publishing Company, Amsterdam-New York-Oxford, 1977.
 
\bibitem{W1985} W. von Wahl, The Equations of Navier-Stokes and Abstract
Parabolic Equations, Springer Fachmedien Wiesbaden GmbH, 1985.


\bibitem{ZBL2019} J. Zhu, Z. Brzeźniak and W. Liu, 
$L_p$-solutions for stochastic Navier–Stokes equations with jump noise, 
Statistics and Probability Letters 155 (2019), 108563


 

\end{thebibliography}
\end{document}